\newtheorem{df}{Definition}[section]
\newtheorem{thm}{Theorem}[section]
\newtheorem{lem}{Lemma}[section]
\newtheorem{prop}{Proposition}[section]
\newtheorem{rem}{Remark}[section]
\newtheorem{que}{Question}[section]
\newcommand{\holie}{L_\infty/A}
\newcommand{\Tsmooth}{T_{\mathcal{O}_M}}
\newcommand*\cocolon{%
        \nobreak
        \mskip6mu plus1mu
        \mathpunct{}%
        \nonscript
        \mkern-\thinmuskip
        {:}%
        \mskip2mu
        \relax
}
\newcounter{para}
\newcommand\mypara{\par\refstepcounter{para}\bigskip\noindent\textbf{\thesection.\thepara}\space}
\title{Homotopy theory of singular foliations}
\author[$\dagger$]{Yaël Frégier}
\author[$\dagger$,$\star$]{Rígel A. Juárez-Ojeda\thanks{Supported throughout the realization of the present work by the fellowship 550849 from CONACYT.}}
\affil[$\dagger$]{LML, Université d'Artois}
\affil[$\star$]{IMJ, Sorbonne Université}
\affil[ ]{\texttt {rigel.juarez@imj-prg.fr, yael.fregier@gmail.com}}
\date{\today}
\date{} 
\begin{document}

\maketitle
\begin{abstract}
    In this article we apply ideas from homotopy theory to the study of singular foliations. We verify that a technical lemma remains valid for left semi-model categories. When applied to the category of $L_\infty$-algebroids thanks to \cite{Nuiten}, this lemma enables to recover results very similar to those of \cite{Lavau} about existence and (up to homotopy) uniqueness of universal $L_\infty$-algebroids associated to a singular foliation. We conclude with some open questions.
\end{abstract}

\section{Introduction}

 Homotopy theory, originally developed in the realm of topology, is now used in most domains of mathematics. One can cite algebraic topology, $\infty$-algebras, operads, higher category theory, mathematical physics or derived geometry as examples. This tendency is in part due to the success of the concept of {\it closed model category} introduced by Quillen in \cite[Chapter 1]{Quillen}. Indeed, a model structure on a category gives access to topological intuition and a collection of important results.

The aim of this article is to illustrate how homotopy theory can be used to get a better understanding of {\it  singular foliations}. Our motivation comes from recent results of Lavau, Laurent-Gengoux and Strobl \cite{Lavau} on singular foliations which are reminiscent of classical results in homotopy theory. We show that these results can actually be deduced form  a {\it left semi-model} structure on the category $\holie$ of {\it $L_\infty$-algebroids}.

We first recall the results of \cite{Lavau} about resolutions of singular foliations and state analogous statements in the language of model categories. We then outline the structure of the paper.

\mypara Foliations are classical objects of study in mathematics. Examples arise in PDEs, Poisson geometry, Lie groups or differential geometry to cite a few. Roughly speaking, a {\it foliation} $\mathcal{F}$ consists in a partition of a smooth manifold $M$ into a collection of sub-manifolds called the {\it leaves} of $\mathcal{F}$. The collection $D(\mathcal{F})$ of tangent spaces to the leaves of $\mathcal{F}$ is called the {\it distribution} associated to the foliation $\mathcal{F}$. The foliation is called {\it regular} when $D(\mathcal{F})$ forms a vector bundle. As the leaves are submanifolds, the module $\Gamma (D(\mathcal{F}))$ of sections of this bundle is involutive under the bracket of vector fields. However, in the general case, i.e. when the tangent spaces to the leaves do not form a vector bundle, one can still consider the modules of vectors fields tangent to the leaves.  {\it Singular foliations} are therefore defined in \cite{An-Sk09}
as locally finitely generated submodules of the module of vector fields, closed under the bracket.

Contrary to regular foliations, which are by now well understood, singular foliations are still mysterious objects. A promising approach has been proposed in \cite{Lavau} to understand a given singular foliation $\mathcal{F}$. The idea is to associate to $\mathcal{F}$ a {\it Lie $\infty$-algebroid} $Q\mathcal{F}$ which is equivalent to $\mathcal{F}$ \cite[Theorem 1.6]{Lavau}. This $Q\mathcal{F}$ is called the {\it universal Lie $\infty$-algebroid over $\mathcal{F}$}. The use of the adjective {\it universal} is justified by \cite[Theorem 1.8]{Lavau} which states that two such universal Lie $\infty$-algebroids over $\mathcal{F}$ are quasi-isomorphic, and that such a quasi-isomorphism is essentially unique (up to homotopy).

\mypara These results should immediately ring a bell to a reader familiar with model categories. For convenience of the reader who is not, let us briefly explain why. Model categories were introduced in order to write down a minimal set of axioms enabling to perform constructions familiar in homological algebra (projective resolutions, derived functors) and in homotopy theory (homotopy relation for maps, fibrations, cofibrations, homotopy category).

Among these constructions, the {\it cofibrant replacement}, which subsumes the notion of projective resolution, is central.  The idea is, given $\mathcal{F}$, to find another object $Q\mathcal{F}$ which is better behaved (i.e. is cofibrant) and remains equivalent to $\mathcal{F}$.
The proper way to formulate these properties is to introduce three classes of maps, $\mathcal{W}$, $Cof$ and $Fib$, respectivelly called {\it weak equivalences}, {\it cofibrations} and {\it fibrations}. $Q\mathcal{F}$ is {\it weak equivalent} to $\mathcal{F}$ means that there exists a map in $\mathcal{W}$ between $Q\mathcal{F}$ and $\mathcal{F}$. $Q\mathcal{F}$ is {\it cofibrant} means that the map  $0\hookrightarrow Q\mathcal{F}$ from the initial object is a cofibration.

Cofibrant replacements are important to derive functors. The {\it left derived functor $D(F)$} of a functor $F$ is defined by $D(F)(\mathcal{F}):=F(Q\mathcal{F})$. A non trivial part of the theory is to ensure that this derived functor is well defined, i.e. does not depend on the choice of the cofibrant replacement. This is relevant for our purpose since key lemmas involved in proving this fact are the exact analogues of the results of \cite{Lavau} we are interested in (see proposition \ref{machinerysemimodel}).

\mypara The natural strategy would therefore be to search for a model structure on the category of  Lie $\infty$-algebroids. However, with the definition of $\cite{Lavau}$ (in terms of complexes of {\it vector bundles}), not every singular foliation admits a cofibrant replacement (section 1.1 of $\cite{Lavau}$). We therefore need to allow ourselves to consider what we call {\it $L_\infty$-algebroids} in definition \ref{holiea}, i.e. a complex $L$ of {\it A-modules} equipped with an $L_\infty$-structure compatible with the A-module structure. The nuance between Lie $\infty$-algebroids and  $L_\infty$-algebroids is that for a Lie $\infty$-algebroid, the module $L$ consists in sections of a differential graded vector bundle. 

However, as remarked by Nuiten in \cite[Example 3.1.12]{Nuiten}, even in the more general setting of $L_\infty$-algebroids, an axiom of the definition of model category fails to be satisfied. Therefore, Nuiten has instead equipped the category $\holie$ of $L_\infty$ algebroids with a {\it semi-model category}, a relaxed version of the concept of model category (see definition \ref{semi-model}).

We can still apply this machinery to singular foliations, but in order to do so, we need the analogues of \cite[Lemma 5.1]{DS} and \cite[Lemma 4.9]{DS} for semi-model categories. This is why we start section \ref{tools} by stating the proposition \ref{machinerysemimodel}, whose proof is essentially the same as for model categories. The only "work" consists in checking that the axioms involved in the classical case are still valid for a semi-model category. In order to facilitate this task for the reader, we recall the
outline of the classical proof in the appendix.

We then give in section \ref{semi} details about Nuiten's construction. After recalling in subsection \ref{rapleft} the precise statement and the relevant definitions, we recall the lemma \ref{lemmatransfer} which enables to transport a semi-model structure  via an adjunction. The next step is to describe in the subsection \ref{slice} the semi model structure on $Mod/A$ that can be transported to $\holie$ via the Free/Forget adjunction. We conclude this section by describing thoroughly in subsections \ref{catholie} and \ref{freeLR}  the free functor involved in this adjunction.  

With this at hand we are able to compare in section \ref{cofibrantreplacement} the results of proposition \ref{tools} applied to  $\holie$ with the results of \cite{Lavau} that we wish to recover. We conclude with in subsection \ref{open} a list of open questions.

\paragraph{Aknowledgements} We would like to thank François Métayer for a discussion about filtered colimits and Chris Rogers for mentioning the reference \cite{Nuiten}. A large part of this work was carried out at the Max Planck Institute for Mathematics in Bonn and we are grateful for the excellent resources and working conditions there.

\section{Main technical tool}\label{tools}

In \cite{Lavau}, the universal Lie $\infty$-algebroids of a singular foliation \cite[Definition 1.5]{Lavau} is proven to behave in the same manner as a cofibrant replacement in a left semi-model category would with respect to homotopies.

The relevant definitions of left semi-model category, cofibrant replacement, and left and right homotopies can be found in Definition~\ref{semi-model}, Definition~\ref{cofibrantrep}, and \ref{lrhomotopy} respectively. The definition of left semi-model category is taken from \cite[Section 4.4.1]{Fresse} and \cite[Definition 1]{Spitzweck} and \cite[Definition 1.5]{Barwick}.

To be more precise, \cite[Theorem 1.6, 1.8, Proposition 1.22]{Lavau} are reminiscent respectively of Proposition~\ref{machinerysemimodel}.\ref{1}, .\ref{5} and .\ref{3} below:

\begin{prop}\label{machinerysemimodel}
Let $\mathcal{C}$ be a left semi-model category. Suppose that $A$, $X$ and $Y$ are objects in $\mathcal{C}$.

\begin{enumerate}
\item \label{1} Every object in $\mathcal{C}$ has a cofibrant replacement.
\item\label{3}
If $A$ is cofibrant, then $\overset{l}{\sim}$ is an equivalence relation on $Hom_\mathcal{C}(A,X)$. We denote the set of equivalence relations by $\pi^l (A,X)$.
\item \label{2}
If $A$ is cofibrant and $p\colon Y\to X$ is an acyclic fibration, then composition with $p$ induces a bijection:
$$
p_\ast\colon \pi^l (A,Y)\to \pi^l (A,X), \ [f]\mapsto [pf].
$$
\item \label{5} Any two cofibrant replacements of $A$ are weak equivalent and any two such weak equivalences between them are left homotopic.
\end{enumerate}
\end{prop}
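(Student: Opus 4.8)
The plan is to reduce both assertions to part \ref{2}, so that the only new input beyond the axioms is the bijection established there. Let $p \colon QA \to A$ and $p' \colon Q'A \to A$ be two cofibrant replacements of $A$; as produced by part \ref{1} we take the structure maps to be acyclic fibrations with $QA$ and $Q'A$ cofibrant. Following \cite[Theorem 1.8]{Lavau}, by a weak equivalence \emph{between the replacements} I mean a weak equivalence $h \colon QA \to Q'A$ lying over $A$, i.e.\ with $p' h = p$.

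I would first produce such an $h$ by solving a lifting problem. The map $0 \to QA$ is a cofibration since $QA$ is cofibrant, and $p'$ is an acyclic fibration, so the square
\[
\begin{CD}
0 @>>> Q'A \\
@VVV @VV{p'}V \\
QA @>{p}>> A
\end{CD}
\]
admits a lift $h \colon QA \to Q'A$ with $p' h = p$: in a left semi-model category cofibrations retain the full left lifting property against acyclic fibrations. Since $p$ and $p'$ are weak equivalences, the two-out-of-three axiom forces $h$ to be one as well, so the two replacements are weak equivalent over $A$.

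For uniqueness up to left homotopy I would apply part \ref{2} with cofibrant object $QA$ and acyclic fibration $p'$, which asserts that
\[
p'_\ast \colon \pi^l(QA, Q'A) \longrightarrow \pi^l(QA, A), \qquad [g] \longmapsto [p' g]
\]
is a bijection (part \ref{3} guarantees that these sets of homotopy classes are well defined). If $h$ and $h'$ are two weak equivalences between the replacements, then $p' h = p = p' h'$, hence $p'_\ast[h] = [p] = p'_\ast[h']$; injectivity of $p'_\ast$ yields $[h] = [h']$, that is $h \overset{l}{\sim} h'$.

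The argument is pure bookkeeping once part \ref{2} is in hand, and the only point demanding vigilance is that every axiom invoked survives the weakening from model to left semi-model categories. This holds because all objects occurring are cofibrant and every lifting we perform pits a cofibration against an acyclic fibration, together with two-out-of-three; these lie among the \emph{unrestricted} axioms of a left semi-model category, the cofibrancy restrictions affecting only the dual pair (acyclic cofibration, fibration), which we never use. Accordingly the genuine difficulty is entirely concentrated in part \ref{2}, where the cofibrancy hypotheses of the semi-model structure are actually exploited.
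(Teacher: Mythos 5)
Your argument for part \ref{5} is correct and is essentially the paper's own: the paper proves it via Lemma~\ref{liftingreplacement}, whose proof is exactly your two steps --- a lift of $p$ against the acyclic fibration $p'$ using the unrestricted half of Axiom \emph{SM2} (cofibration versus trivial fibration), two-out-of-three to see the lift is a weak equivalence, and then injectivity of $p'_\ast$ from part \ref{2} to get uniqueness up to left homotopy. That portion of your proposal needs no repair.

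The gap is that this is only one of the four assertions, and you explicitly defer the other three rather than prove them. Part \ref{1} is indeed immediate (factor $0\to A$ by the unrestricted half of \emph{SM3}), but parts \ref{3} and \ref{2} are precisely where the paper does its real work, and they cannot be waved through: the point of the proposition is that the classical arguments survive the weakening to semi-model categories, and this survival is not automatic for these two parts. For part \ref{3}, transitivity of $\overset{l}{\sim}$ requires gluing two cylinder objects $Cyl'(A)$ and $Cyl''(A)$ along $A$ by a pushout, and one must know that the maps $A\to Cyl'(A)$, $A\to Cyl''(A)$ are trivial cofibrations whose pushouts remain trivial cofibrations; in a semi-model category trivial cofibrations are only closed under pushout along morphisms between \emph{cofibrant} objects (the paper's Lemma~\ref{pushoutclosed}, itself needing the restricted half of \emph{SM2}), so the cofibrancy of $A$, of $A\amalg A$, and of $Cyl'(A)$ must all be tracked. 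For part \ref{2}, the paper adapts \cite[Lemma 4.9]{DS} and must check that the argument only invokes the restricted axioms in situations where the domains involved are cofibrant. Your closing remark that the cofibrancy-restricted axioms are ``never used'' is true of your part-\ref{5} argument in isolation, but it is false for the proposition as a whole: the deferred parts are exactly where those restrictions bite, so declaring the difficulty ``concentrated in part \ref{2}'' without addressing it leaves the proposition unproven.
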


We flesh out this analogy in Section \ref{app} of this work by applying this proposition to $\mathcal{C}=L_\infty/\mathcal{O}_M$ (Definition~\ref{holiea}), the category of $L_\infty$-algebroids over a manifold $M$.

\begin{rem}
In what follows we stop writing ``left'' and simply write semi-model category for a left semi-model category.
\end{rem}

\section{A semi-model structure on $\bm{\holie}$}\label{semi}
We remind the reader that throughout this section we consider categories of differential non-negatively graded modules over some base field $\mathbbm{k}$ of characteristic 0 and degree preserving morphisms unless otherwise stated.

In order to be able to apply Proposition~\ref{machinerysemimodel} to the category of $L_\infty$-algebroids, we need to equip it with a cofibrantly generated semi-model structure. In Section~\ref{rapleft} we state Theorem~\ref{transfer} \cite{Nuiten}, which gives a semi-model structure on $\holie$, the category of $L_\infty$-algebroids, from the model structure described in Section~\ref{slice} on $Mod/T_A$, the category of $dg\text{-}A\text{-}$modules over $T_A$. In Section~\ref{catholie} we define the category $\holie$, which is the main focus of this work, and in Section~\ref{freeLR} we describe the free functor $LR\colon Mod/T_A\to \holie$ that allows to apply Lemma~\ref{lemmatransfer}.

\subsection{Statement of the result}\label{rapleft}
In this subsection we present Theorem~\ref{transfer}, which allows us to use the machinery available in model categories for $\holie$ (Definition~\ref{holiea}). This theorem is originally proved for $dg$-algebras and unbounded modules, but for our purposes we just need to state it for an algebra $A$ concentrated in degree 0 and non-negatively differential graded modules.

We start by stating the definition of semi-model category.

\begin{df}\label{semi-model}
A {\bf semi-model category} is a bicomplete category $\mathcal{C}$ with three subcategories $\mathcal{W}$, $Fib$ and $Cof$, each containing all identity maps, such that:
\begin{enumerate}
\item[SM1)] $\mathcal{W}$ has the 2-out-of-3 property. As well, $\mathcal{W}$, $Fib$ and $Cofib$ are stable under retracts.
\item[SM2)] \label{SM2} The elements of $Cofib$ have the left lifting property with respect to $\mathcal{W}\cap Fib$. The elements of $\mathcal{W}\cap Cofib$ with cofibrant domain have the left lifting property with respect to fibrations.
\item[SM3)]\label{semifactorization} Every map can be factored functorially into a cofibration followed by a trivial fibration. Every map with cofibrant domain can be factored as a trivial cofibration followed by a fibration.
\item[SM4)] The fibrations and trivial fibrations are stable under transfinite composition, products, and base change.
\end{enumerate}
\end{df}

\begin{rem}\label{semifrommod}
The definition of a semi-model category is a weakening of the notion of {\bf closed model category} which needs in addition the following modifications to the axioms:
\begin{enumerate}
    \item[SM2' \& SM3'] The axioms SM2 and SM3 hold without assumption of cofibrancy of the domain.
\end{enumerate}
One does not require for a closed model category the axiom SM4 which follows from the other axioms.
\end{rem}

\begin{rem}
In particular, any model category is a semi-model category.
\end{rem}

\begin{thm}\cite[Lemma 2.14, Theorem 3.1]{Nuiten}\label{transfer}
Let $A$ be an algebra. Then the category $\holie$ of $L_\infty$-algebroids over $A$ admits a semi-model structure, in which a map is a weak equivalence (resp. a fibration) if and only if it is a quasi-isomorphism (surjection on positive degrees).
\end{thm}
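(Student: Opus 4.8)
The plan is to obtain the semi-model structure on $\holie$ by \emph{transfer} along the free/forget adjunction
\[
LR\colon Mod/T_A \rightleftarrows \holie,
\]
whose left adjoint is the free $L_\infty$-algebroid functor of Section~\ref{freeLR} and whose right adjoint forgets the $L_\infty$-structure down to the underlying dg-$A$-module over $T_A$. Assuming the model structure on $Mod/T_A$ from Section~\ref{slice}, I would declare a map $f$ in $\holie$ to be a weak equivalence (resp.\ a fibration) exactly when its image under the forgetful functor is a quasi-isomorphism (resp.\ a surjection in positive degrees), and take the cofibrations to be those maps with the left lifting property against trivial fibrations. The whole statement then reduces to checking that this transferred data satisfies the axioms SM1--SM4, which is precisely the content of the transfer criterion Lemma~\ref{lemmatransfer}.

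To apply Lemma~\ref{lemmatransfer} I would verify its hypotheses in turn. First, bicompleteness of $\holie$: limits are computed in the underlying category of dg-$A$-modules, while colimits follow from local presentability of $\holie$ together with the fact that $LR$, being a left adjoint, preserves the relevant colimits. Second, the permissibility of the small object argument: the generating sets $I$ and $J$ of cofibrations and trivial cofibrations of $Mod/T_A$ have small domains, and their images $LR(I)$ and $LR(J)$ retain smallness relative to the corresponding cell complexes in $\holie$, again because $LR$ is a left adjoint with locally presentable source. Third, and crucially, the \emph{acyclicity} condition: every relative $LR(J)$-cell complex must be a weak equivalence.

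This acyclicity condition is where the main difficulty lies, and it is exactly the point at which the argument is forced into the \emph{semi}-model setting. In a genuine model structure one would need every pushout of a map in $LR(J)$ to be a quasi-isomorphism regardless of its domain; as Nuiten observes in \cite[Example 3.1.12]{Nuiten} this fails for $L_\infty$-algebroids, which is why SM2 and SM3 are only demanded for maps with cofibrant domain. The plan is therefore to establish acyclicity only under the cofibrancy hypothesis. Concretely, I would analyse a single pushout of a generating trivial cofibration $LR(j)$ along a map out of a cofibrant object, filter the resulting free construction by word-length (arity) of the free $L_\infty$-algebroid, and show that on the associated graded the map is a quasi-isomorphism because the generators being adjoined form an acyclic complex whose only surviving differential is the internal (linear) one; here characteristic $0$ enters through the symmetric, Chevalley--Eilenberg-type nature of the free functor, which lets one split the relevant invariants by Maschke's theorem. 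Passing from a single pushout to transfinite composites of relative cells is then a standard filtered-colimit argument, using that quasi-isomorphisms of non-negatively graded complexes are stable under filtered colimits.

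Granting acyclicity, the two lifting statements of SM2 and the two factorizations of SM3 follow from the small object argument applied to $LR(I)$ and $LR(J)$; the retract and $2$-out-of-$3$ closure SM1 is inherited from $Mod/T_A$ through the forgetful functor; and SM4 holds since fibrations and trivial fibrations are detected by that functor, which preserves the limits and base changes in question. The main obstacle, as indicated above, is the cofibrant-domain acyclicity computation, since it both requires an explicit understanding of the free functor $LR$ and is the precise reason the structure is only \emph{semi}-model rather than model.
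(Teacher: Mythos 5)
Your proposal takes essentially the same route as the paper: the semi-model structure on $\holie$ is obtained by transferring the model structure on $Mod/T_A$ (Proposition~\ref{modinMODA}) along the free/forget adjunction of Proposition~\ref{freelr} via the transfer criterion of Lemma~\ref{lemmatransfer}, with the cofibrant-domain acyclicity condition as the crux and as the precise reason one only obtains a \emph{semi}-model structure. The only difference is one of detail: the paper's proof simply defers to \cite{Nuiten}, observing that his constructions for the unbounded case preserve the subcategory $dg\text{-}A\text{-}Mod^{\geq 0}$, whereas you sketch the acyclicity verification itself (word-length filtration of the free construction, characteristic~$0$, stability of quasi-isomorphisms under filtered colimits), which is consistent with Nuiten's actual argument.
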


In fact, this structure is cofibrantly generated \cite[Definition 11.1.2]{HirschModel}, \cite[Chapter 11]{HirschModel} and \cite[Section 2.1]{Hovey}, which means that there is a set of maps generating the class of trivial fibrations through the right lifting property.

Theorem~\ref{transfer} is proven through the following version of the transfer lemma:
\begin{lem}[Transfer of semi-model structure]\label{lemmatransfer}
Let $F\colon \mathcal{M}\leftrightarrow \mathcal{N}\cocolon G$ be an adjunction between locally presentable categories and suppose that $\mathcal{M}$ carries a semi-model structure with sets of generating (trivial) cofibrations $I$ and $J$.

Define a map in $\mathcal{N}$ to be a weak equivalence (fibration) if its image under $G$ is a weak equivalence (fibration) in $\mathcal{M}$ and a cofibration if it has the left lifting property against the trivial fibrations. Assume that the following condition holds:
\begin{itemize}
    \item Let $f\colon A\to B$ be a map in $\mathcal{N}$ with cofibrant domain, obtained as a transfinite composition of pushouts f maps in $F(J)$. Then $f$ is a weak equivalence.
\end{itemize}
Then the above classes of maps determine a tractable semi-model structure on $\mathcal{N}$ whose generating (trivial) cofibrations are given by $F(I)$ and $F(J)$.
\end{lem}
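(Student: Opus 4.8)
The plan is to run the standard small-object-argument transfer, keeping careful track of where the cofibrancy hypotheses of the semi-model axioms enter. Since $\mathcal{M}$ and $\mathcal{N}$ are locally presentable they are bicomplete (so the underlying requirement of Definition~\ref{semi-model} holds), and every object is small, so both $F(I)$ and $F(J)$ permit the small object argument. The first thing I would record is the adjunction identification of the right-hand classes: because lifting problems transpose across $F\dashv G$, a map $p$ in $\mathcal{N}$ is $F(I)$-injective (resp. $F(J)$-injective) if and only if $G(p)$ is $I$-injective (resp. $J$-injective) in $\mathcal{M}$, i.e. if and only if $G(p)$ is a trivial fibration (resp. a fibration). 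By the definitions of weak equivalence and fibration in $\mathcal{N}$ this says precisely that the $F(J)$-injective maps are the fibrations and the $F(I)$-injective maps are the trivial fibrations. In particular the cofibrations, defined by the left lifting property against trivial fibrations, are exactly the maps with the left lifting property against $F(I)$-injectives, hence the saturation of $F(I)$; likewise each $F(j)$ is a cofibration, since $j\in J$ lifts against fibrations in $\mathcal{M}$ and so $F(j)$ lifts against trivial fibrations in $\mathcal{N}$.

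Next I would produce the two factorizations of SM3. Applying the small object argument to $F(I)$ factors an arbitrary map as an $F(I)$-cell complex followed by an $F(I)$-injective map, i.e. as a cofibration followed by a trivial fibration; this needs no cofibrancy hypothesis and yields the first clause. For the second clause, I would apply the small object argument to $F(J)$, factoring a map with cofibrant domain as an $F(J)$-cell complex $j'$ followed by an $F(J)$-injective map, i.e. a fibration. The map $j'$ is a cofibration, being a pushout-and-transfinite-composition of maps in $F(J)$ and hence lying in the left-lifting class; and since its domain is cofibrant it is exactly a transfinite composition of pushouts of maps in $F(J)$ out of a cofibrant object, so the hypothesis of the lemma forces $j'$ to be a weak equivalence. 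Thus $j'$ is a trivial cofibration and we obtain the (trivial cofibration, fibration) factorization for maps with cofibrant domain.

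The lifting axiom SM2 then follows. Its first clause holds by definition. For the second clause I would use the retract argument: given a trivial cofibration $i$ with cofibrant domain, factor it as $i=p\,j'$ with $j'$ an $F(J)$-cell complex and $p$ a fibration, as above. Because the domain of $i$ is cofibrant, $j'$ is a weak equivalence; since $i$ is a weak equivalence, $p$ is one by 2-out-of-3, hence a trivial fibration. As $i$ is a cofibration and $p$ a trivial fibration, the commutative square with left side $i$, right side $p$, top $j'$ and bottom the identity admits a diagonal filler, exhibiting $i$ as a retract of $j'$; since $j'$ has the left lifting property against fibrations, so does $i$. This is the one place where the assumed acyclicity condition is genuinely used, and it is the crux of the argument: in a full model-category transfer one would have to \emph{prove} that condition, typically via a path object or a fibrant-replacement/acyclicity argument, whereas here it is a hypothesis, so the real work reduces to threading the cofibrant-domain assumption consistently through the factorization and the retract.

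Finally I would clear the residual axioms. The class $\mathcal{W}$ inherits 2-out-of-3 from $\mathcal{M}$ because $G$ is a functor reflecting weak equivalences; $\mathcal{W}$ and $Fib$ are retract-closed because $G$ preserves retracts and the corresponding classes in $\mathcal{M}$ are retract-closed, while $Cof$ is retract-closed as a left-lifting class, which gives SM1. For SM4, the fibrations and trivial fibrations are the $F(J)$-injective and $F(I)$-injective maps, and injective classes are automatically stable under base change and products and, using the smallness furnished by local presentability, under transfinite composition. Tractability is transported from $\mathcal{M}$: a left adjoint preserves the initial object and colimits, so if $I$ and $J$ have cofibrant domains then $F$ sends each cofibration $\varnothing\to\mathrm{dom}\,i$ to a cofibration $\varnothing\to F(\mathrm{dom}\,i)$, whence $F(I)$ and $F(J)$ again have cofibrant domains. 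Assembling these verifications shows that $\mathcal{W}$, $Fib$ and $Cof$ form a tractable semi-model structure on $\mathcal{N}$ with generating (trivial) cofibrations $F(I)$ and $F(J)$.
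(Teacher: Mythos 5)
Your proof is correct, and it is essentially the standard transfer argument for semi-model structures; note that the paper itself offers no proof of this lemma (it is quoted from Nuiten's Lemma 2.14, and the paper only proves that Nuiten's constructions restrict to the non-negatively graded setting), so the comparison is really with the cited reference, whose proof proceeds exactly as yours does: transpose lifting problems across $F \dashv G$, run the small object argument on $F(I)$ unconditionally and on $F(J)$ for cofibrant domains, and combine the assumed acyclicity condition with the retract argument to obtain the cofibrancy-restricted halves of SM2 and SM3.

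One small repair is worth making. To see that each $F(j)$, $j\in J$, is a cofibration in $\mathcal{N}$, you argue that ``$j\in J$ lifts against fibrations in $\mathcal{M}$''; in a semi-model category this lifting property is only guaranteed when $j$ has cofibrant domain, so as stated you are silently invoking tractability of $\mathcal{M}$. The cleaner justification is that $j$ is a (trivial) cofibration in $\mathcal{M}$ and hence lifts against the trivial fibrations $G(p)$ by the first, unrestricted clause of SM2, which is all you need. Relatedly, your tractability paragraph is phrased conditionally (``if $I$ and $J$ have cofibrant domains''), and rightly so: the paper's statement of the lemma omits the hypothesis that the semi-model structure on $\mathcal{M}$ be tractable, whereas Nuiten's original formulation includes it, and it is genuinely needed to conclude tractability of the transferred structure.
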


The transfer takes the model structure on the category $Mod/T_A$ (Definition~\ref{anchored}) and gives a semi-model structure on $\holie$ through an adjunction
\begin{center}
\begin{tikzcd}
\holie
\arrow[bend left=35]{r}[name=F]{F}
& Mod/T_A
\arrow[bend left=35]{l}[name=LR]{LR}
\end{tikzcd}
\end{center}
The existence of such adjunction is the content of Proposition~\ref{freelr} below.

\begin{proof}[Proof of Theorem~\ref{transfer}]
The constructions in \cite{Nuiten} used to prove the unbounded case preserve the subcategory $dg\text{-}A\text{-}Mod^{\geq0}$. Thus, the result follows in this case.
\end{proof}


\subsection{The model structure on $\bm{Mod/A}$}\label{slice}
The objective of this subsection is to prove Proposition~\ref{modinMODA}, where we give a model structure on the slice category $Mod/T_A$, the category of anchored differential non-negatively graded modules over an algebra, which we describe as a slice category in Definition~\ref{anchored}. We then state a general result, Theorem~\ref{modelunder}. It enables to obtain the model structure on $Mod/T_A$ from the model structure on $dg\text{-}A\text{-}Mod^{\geq0}$ given in Theorem~\ref{modinMOD}.


Recall the classical notion of slice category :

\begin{df} Let $\mathcal{M}$ be a category and $Z$ and object of $\mathcal{M}$. The {\bf slice category} $\mathcal{M}/Z$ has for objects arrows of $\mathcal{M}$ the form $x\rightarrow z$
and for arrows commutative diagrams in $\mathcal{M}$ of the form
\begin{center}
\begin{tikzpicture}[normal line/.style={->},font=\scriptsize]
\matrix (m) [matrix of math nodes, row sep=2em,
column sep=1.5em, text height=1.5ex, text depth=0.25ex]
{ X& & Y \\
 & Z. & \\ };

\path[normal line]
(m-1-1) edge  (m-1-3)
edge  (m-2-2)
(m-1-3) edge  (m-2-2);
\end{tikzpicture}
\end{center}
\end{df}

In what follows, we denote by $T_A$ the $A$-module of derivations of $A$. We can look at $T_A$ as a graded $A$-module and if $A$ is concentrated in degree 0, so is $T_A$.
\begin{df} \label{anchored}
We denote by $\bm{Mod/T_A}$ the slice category $dg\text{-}A\text{-}Mod^{\geq0}/T_A$. We refer to such an object $\rho\colon M\to T_A$ as an {\bf anchored module}.
\end{df}

\begin{prop}\label{modinMODA}
The category $Mod/T_A$ has a cofibrantly generated semi-model structure.
\end{prop}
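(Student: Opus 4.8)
The plan is to recognize $Mod/T_A$ as the slice category $dg\text{-}A\text{-}Mod^{\geq0}/T_A$ (Definition~\ref{anchored}) of a category that already carries a well-behaved model structure, and then simply transport that structure along the slice construction. Concretely, I would deduce the statement as a direct application of the general slice-category result, Theorem~\ref{modelunder}, to the cofibrantly generated model structure on $dg\text{-}A\text{-}Mod^{\geq0}$ provided by Theorem~\ref{modinMOD}. Since, as recalled above, every model category is in particular a semi-model category, and since cofibrant generation is preserved by the construction, establishing a cofibrantly generated \emph{model} structure on the slice is more than enough to obtain the claimed cofibrantly generated semi-model structure.

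More precisely, I would proceed in three steps. First, invoke Theorem~\ref{modinMOD} to fix on $\mathcal{M}:=dg\text{-}A\text{-}Mod^{\geq0}$ a cofibrantly generated model structure, with weak equivalences the quasi-isomorphisms, fibrations the surjections in positive degrees, and with chosen sets $I$ and $J$ of generating cofibrations and generating trivial cofibrations. Second, apply Theorem~\ref{modelunder} with $Z=T_A$. This equips the slice $\mathcal{M}/T_A=Mod/T_A$ with the transferred classes of maps, detected by the forgetful functor $U\colon \mathcal{M}/T_A\to\mathcal{M}$: a morphism of anchored modules is a weak equivalence, fibration, or cofibration precisely when its underlying morphism in $\mathcal{M}$ is. The generating sets are obtained by pairing each generating (trivial) cofibration with an anchor on its target, namely
$$
I_{/T_A}=\bigl\{\,(A,\rho\circ i)\xrightarrow{\ i\ }(B,\rho)\ :\ (i\colon A\to B)\in I,\ \rho\in \mathrm{Hom}_{\mathcal{M}}(B,T_A)\,\bigr\},
$$
and analogously $J_{/T_A}$ from $J$. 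Third, conclude: a model structure is a semi-model structure (Definition~\ref{semi-model}), and the sets $I_{/T_A}$, $J_{/T_A}$ cofibrantly generate it, giving the proposition.

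The substantive content sits inside Theorem~\ref{modelunder}, so within this proposition the work is only to check that its hypotheses are met here; I expect the main subtlety to be of a set-theoretic rather than conceptual nature. The model axioms transfer because the forgetful functor $U$ creates all colimits and all connected limits, and because a lifting problem in $Mod/T_A$ against $U$-detected classes is solved exactly when the underlying lifting problem in $\mathcal{M}$ is solved (any underlying lift automatically respects the anchor, by the outer commutativity of the square over $T_A$). The point requiring care is that the generating families are genuinely \emph{sets} and that the small object argument still applies: $I_{/T_A}$ and $J_{/T_A}$ are sets because $I$, $J$ are sets and each $\mathrm{Hom}_{\mathcal{M}}(B,T_A)$ is a set, while smallness of the domains of $I_{/T_A}$, $J_{/T_A}$ is inherited from smallness of the domains of $I$, $J$ in $\mathcal{M}$, since $U$ preserves the filtered colimits along which smallness is tested. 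With these verifications in place, Theorem~\ref{modelunder} yields the desired cofibrantly generated (semi-)model structure on $Mod/T_A$.
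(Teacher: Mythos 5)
Your proposal is correct and follows exactly the paper's own route: apply Theorem~\ref{modelunder} (the slice-category result) to the cofibrantly generated model structure on $dg\text{-}A\text{-}Mod^{\geq0}$ from Theorem~\ref{modinMOD}, then observe via Remark~\ref{semifrommod} that a model structure is in particular a semi-model structure. The extra verifications you spell out (the generating families being sets, smallness of domains, lifts respecting the anchor) are sound and simply make explicit what the paper leaves implicit in its one-line proof.
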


In Remark~\ref{semifrommod} we highlight the fact that a model structure is a particular example of a semi-model structure. So, we actually exhibit a stronger, i.e. a {\it cofibrantly generated} model structure on $Mod/T_A$.

\begin{thm}\cite[Theorem 7]{Hirsch}
  \label{modelunder}
  Let $\mathcal{M}$ be a cofibrantly generated model category (see
  \cite[Definition~11.1.2]{HirschModel}) with generating cofibrations $I$ and
  generating trivial cofibration $J$, and let $Z$ be an object of
  $\mathcal{M}$.  If
  \begin{enumerate}
  \item $I_{Z}$ is the set of maps in $\mathcal{M}/{Z}$ of the form
   \begin{center}
  \begin{equation} \label{morphabove}
\begin{tikzpicture}[normal line/.style={->},font=\scriptsize]
\matrix (m) [matrix of math nodes, row sep=2em,
column sep=1.5em, text height=1.5ex, text depth=0.25ex]
{ X& & Y \\
 & Z & \\ };

\path[normal line]
(m-1-1) edge  (m-1-3)
edge  (m-2-2)
(m-1-3) edge  (m-2-2);
\end{tikzpicture}
\end{equation}
\end{center}
    in which the map $X \to Y$ is an element of $I$ and
  \item $J_{Z}$ is the set of maps in $\mathcal{M}/{Z}$ of the
    form \eqref{morphabove} in which the map $X \to Y$ is an element of
    $J$,
  \end{enumerate}
  then the standard model category structure on $\mathcal{M}/{Z}$
  (in which a map \eqref{morphabove} is a cofibration, fibration, or
  weak equivalence in $\mathcal{M}/{Z}$ if and only if the map $X
  \to Y$ is, respectively, a cofibration, fibration, or weak
  equivalence in $\mathcal{M}$) is cofibrantly generated, with generating
  cofibrations $I_{Z}$ and generating trivial cofibrations $J_{Z}$.
\end{thm}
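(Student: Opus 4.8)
The plan is to deduce the model structure from Kan's recognition theorem for cofibrantly generated model categories \cite[Theorem 11.3.1]{HirschModel}, applied to $\mathcal{M}/Z$ equipped with the sets $I_Z$ and $J_Z$ and with weak equivalences $\mathcal{W}_Z := U^{-1}(\mathcal{W})$, where $U\colon \mathcal{M}/Z \to \mathcal{M}$ is the forgetful functor sending an object \eqref{morphabove} to the map $X \to Y$ (more precisely, an object $X\to Z$ to $X$). First I would record the structural facts about $U$: it creates all colimits and all connected limits, and $\mathcal{M}/Z$ has terminal object $\mathrm{id}_Z$, so the bicompleteness of $\mathcal{M}$ transfers to $\mathcal{M}/Z$. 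Since $U$ both preserves and reflects weak equivalences by the very definition of $\mathcal{W}_Z$, the $2$-out-of-$3$ property and closure under retracts for $\mathcal{W}_Z$ follow at once from the corresponding properties of $\mathcal{W}$.

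The key step is the correspondence of lifting properties: a morphism $g$ in $\mathcal{M}/Z$ has the right lifting property against $I_Z$ (respectively $J_Z$) if and only if $U(g)$ has the right lifting property against $I$ (respectively $J$) in $\mathcal{M}$. In one direction, applying $U$ turns a lifting problem for $g$ against a map of $I_Z$ into a lifting problem against the underlying map of $I$. Conversely, any lift $h$ produced in $\mathcal{M}$ is automatically a morphism over $Z$: writing $p_E$, $p_B$, $p_M$ for the structure maps to $Z$ and using that $g$ and the bottom edge of the square live over $Z$, one computes $p_E \circ h = p_M \circ g \circ h = p_B$. Consequently the $I_Z$-injective maps are exactly those whose underlying map is a trivial fibration of $\mathcal{M}$, and the $J_Z$-injective maps are exactly those whose underlying map is a fibration of $\mathcal{M}$, which matches the description of fibrations and trivial fibrations asserted in the statement.

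With this correspondence in hand, the remaining hypotheses of the recognition theorem are verified by transporting the corresponding facts in $\mathcal{M}$ along $U$. Smallness of the domains of $I_Z$ and $J_Z$ relative to the respective cell complexes reduces to smallness of the domains of $I$ and $J$ in $\mathcal{M}$, since $U$ creates the transfinite colimits defining cell complexes and hence carries relative $I_Z$-cell (resp. $J_Z$-cell) complexes to relative $I$-cell (resp. $J$-cell) complexes. The same observation shows that every relative $J_Z$-cell complex has underlying map a relative $J$-cell complex, hence a trivial cofibration of $\mathcal{M}$, and therefore lies in $\mathcal{W}_Z$. Finally, the compatibility condition $I_Z\text{-inj} = \mathcal{W}_Z \cap J_Z\text{-inj}$ follows from the equality $I\text{-inj} = \mathcal{W} \cap J\text{-inj}$ holding in $\mathcal{M}$, again via the lifting correspondence. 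The recognition theorem then yields the desired cofibrantly generated model structure with generating cofibrations $I_Z$ and generating trivial cofibrations $J_Z$.

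The main obstacle I anticipate is not any single computation but the bookkeeping required to confirm that the three generated classes coincide with the classes described in the statement. The fibration and weak-equivalence cases are handled directly by the lifting correspondence and the definition of $\mathcal{W}_Z$. For cofibrations one must check that the maps with the left lifting property against the $I_Z$-injectives (equivalently, against the maps whose underlying map is a trivial fibration) are precisely the maps \eqref{morphabove} whose underlying map $X \to Y$ is a cofibration of $\mathcal{M}$; this is the dual form of the lifting correspondence combined with the characterization of the cofibrations of $\mathcal{M}$ as the maps with the left lifting property against trivial fibrations, and it is exactly the place where one must verify that the constraint of living over $Z$ imposes no additional condition.
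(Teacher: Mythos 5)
First, a point of context: the paper does not prove this statement at all --- it is imported verbatim as \cite[Theorem 7]{Hirsch} and only \emph{used} (in the proof of Proposition~\ref{modinMODA}), so there is no internal proof to compare against; your attempt must be judged on its own merits against the standard argument. Your overall route (transfer along the forgetful functor $U\colon \mathcal{M}/Z\to\mathcal{M}$ plus the recognition theorem \cite[Theorem~11.3.1]{HirschModel}) is the right one, and the structural points you record (creation of colimits, bicompleteness, smallness, relative $J_Z$-cell complexes landing in $\mathcal{W}_Z$) are correct. However, your ``key step'' --- the equivalence that $g$ has the RLP against $I_Z$ if and only if $U(g)$ has the RLP against $I$ --- is only argued in one direction: your sentences ``In one direction\dots'' and ``Conversely\dots'' are actually the two halves of the \emph{same} implication (translate the square, lift in $\mathcal{M}$, check the lift is over $Z$). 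The genuine converse requires promoting an arbitrary square in $\mathcal{M}$ testing $U(g)$ against $i\colon A\to B$ in $I$ to a square in $\mathcal{M}/Z$, by equipping $A$ and $B$ with the structure maps $p_E\circ u$ and $p_M\circ v$ (with $u,v$ the horizontal edges of the square) and checking compatibility; this uses precisely that $I_Z$ consists of \emph{all} maps over $Z$ whose underlying map lies in $I$, with arbitrary structure maps. This converse is load-bearing: every clause of the compatibility condition $I_Z\text{-inj}=\mathcal{W}_Z\cap J_Z\text{-inj}$ that you invoke needs to pass from a lifting property of $g$ in the slice to a property of $U(g)$ in $\mathcal{M}$, which is exactly the unproved direction.

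The more serious gap is the identification of the cofibrations, which you flag as the main obstacle and then dispatch by appeal to ``the dual form of the lifting correspondence''. There is no such dual form obtainable by the same argument, because the situation is not symmetric: in the RLP correspondence the test maps are the generating maps in $I_Z$, whose sources and targets acquire maps to $Z$ by composition, whereas in the would-be LLP correspondence the test maps are \emph{arbitrary} trivial fibrations $q\colon E\to M$ of $\mathcal{M}$, which come with no map to $Z$, and no composition can manufacture one (the arrows point the wrong way). So the step as you describe it would fail if attempted literally. Two standard repairs: (a) base change --- the map $q\times\mathrm{id}_Z\colon E\times Z\to M\times Z$ is the pullback of $q$ along the projection $M\times Z\to M$, hence again a trivial fibration, and it is canonically a map in $\mathcal{M}/Z$; a square testing $U(f)$ against $q$ promotes to a square over $Z$ testing $f$ against $q\times\mathrm{id}_Z$ via $(u,p_A)$ and $(v,p_B)$, and a lift over $Z$ composed with the projection $E\times Z\to E$ solves the original problem; or (b) the small object argument --- any $f$ with the LLP against $I_Z\text{-inj}$ is a retract of a relative $I_Z$-cell complex, $U$ sends relative $I_Z$-cell complexes to relative $I$-cell complexes and preserves retracts, hence $U(f)$ is a cofibration of $\mathcal{M}$; combined with the easy converse inclusion this yields that the $I_Z$-cofibrations are exactly the maps whose underlying map is a cofibration. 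With either repair (and the promoted-square argument above) your proof goes through and recovers Hirschhorn's theorem.
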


We introduce the classes of cofibrations and trivial fibrations in $dg\text{-}A\text{-}Mod^{\geq0}$:

Let the ``disc" $D_A(n)$ denote the chain complex $A[n]\xrightarrow{Id_A} A[n-1]$, and the ``sphere"  $S_A(n)$ denote the chain complex $A[n]$ with trivial differential. Let $J$ be the set of morphisms of complexes of the form $0\to D_A(n)$ and let $I$ be the set of inclusions $S_A(n-1)\to D_A(n)$.

\begin{thm}\cite[Theorem 2.3.11]{Hovey}\label{modinMOD}
$dg\text{-}A\text{-}Mod^{\geq0}$ is a cofibrantly generated model category with $I$ as its generating set of cofibrations, $J$ as its generating set of trivial cofibrations, and quasi-isomorphisms as its weak equivalences. The fibrations are the surjections.
\end{thm}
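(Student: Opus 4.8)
The plan is to deduce the statement from the recognition theorem for cofibrantly generated model structures (Kan's theorem, \cite[Theorem 2.1.19]{Hovey}): for a bicomplete category equipped with a class $\mathcal{W}$ that is closed under retracts and has the 2-out-of-3 property, together with sets $I$ and $J$ of maps whose domains are small, one obtains a cofibrantly generated model structure as soon as (i) $J\text{-cell}\subseteq\mathcal{W}\cap I\text{-cof}$, (ii) $I\text{-inj}\subseteq\mathcal{W}\cap J\text{-inj}$, and (iii) one of the inclusions $\mathcal{W}\cap I\text{-cof}\subseteq J\text{-cof}$ or $\mathcal{W}\cap J\text{-inj}\subseteq I\text{-inj}$ holds. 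The formal inputs are easy to supply: $dg\text{-}A\text{-}Mod^{\geq0}$ is bicomplete since limits and colimits are computed degreewise in the bicomplete category of $A$-modules; quasi-isomorphisms have 2-out-of-3 and are closed under retracts by functoriality of homology; and, being locally presentable, the category has all objects small, so in particular the domains of $I$ and $J$ are small and the small object argument supplies the functorial factorizations.

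The substance of the proof lies in computing the two lifting classes. The key elementary facts are that a chain map out of $D_A(n)$ is the same datum as an element of degree $n$ in the target, and a chain map out of $S_A(n-1)$ is the same datum as a cycle of degree $n-1$. Unwinding the lifting problem against $J=\{0\to D_A(n)\}$, one finds that $f\in J\text{-inj}$ precisely when $f$ is surjective in each positive degree; these are declared to be the fibrations (the degree-$0$ part is unconstrained, since no disc $D_A(0)$ occurs). Unwinding the lifting problem against $I=\{S_A(n-1)\to D_A(n)\}$, one finds that $f\in I\text{-inj}$ precisely when, for every cycle $z$ in the source and every $y$ in the target with $dy=f(z)$, there is a preimage $\tilde y$ with $d\tilde y=z$ and $f(\tilde y)=y$. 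A diagram chase turns this into the assertion that $f$ is a surjective quasi-isomorphism, so that $I\text{-inj}=\mathcal{W}\cap J\text{-inj}$; this simultaneously gives (ii) and the second alternative in (iii).

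It remains to verify (i). Each $0\to D_A(n)$ lies in $I\text{-cof}$, since surjectivity lets one lift any element of the target through an $I$-injective (hence surjective) map; as $I\text{-cof}$ is closed under pushout and transfinite composition, every $J$-cell is an $I$-cofibration. For the weak-equivalence half, the crucial point is that each disc $D_A(n)$ is contractible: a pushout of $0\to D_A(n)$ is, up to isomorphism, an inclusion $X\to X\oplus D_A(n)$ of a direct summand with acyclic complement, hence a quasi-isomorphism, and a transfinite composition of such maps remains a quasi-isomorphism because homology commutes with the filtered colimits along these monomorphisms. With (i)--(iii) in hand the recognition theorem produces the model structure, with $I$ and $J$ as generating (trivial) cofibrations, quasi-isomorphisms as weak equivalences, and the surjections as fibrations.

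The step I expect to be the main obstacle is the homological identification $I\text{-inj}=\mathcal{W}\cap J\text{-inj}$: surjectivity drops out immediately, but establishing that an $I$-injective map is a quasi-isomorphism (equivalently, that its kernel is acyclic) requires the careful cycle-versus-boundary bookkeeping hidden in the lifting square, and one has to watch the degree conventions so that the bounded-below truncation introduces no spurious obstruction in low degrees. Everything else is either formal or a direct consequence of the contractibility of the discs.
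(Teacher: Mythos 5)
Your overall strategy---Kan's recognition theorem together with an explicit computation of the two lifting classes---is sound, and it is in fact how the source the paper cites (Hovey, Theorem 2.3.11) proves the unbounded analogue; the paper itself offers no proof beyond that citation. However, there is a genuine gap at exactly the step you flagged as the main obstacle, and it does not go through as written. Reading $I=\{S_A(n-1)\to D_A(n)\}$ with $n\geq 1$ (which is your reading, since you stress that no disc $D_A(0)$ occurs), the identification $I\text{-inj}=\mathcal{W}\cap J\text{-inj}$ is \emph{false}. Concretely, the map $0\to S_A(0)$ lies in $I\text{-inj}$: in any lifting square against $S_A(n-1)\to D_A(n)$ the bottom map is an element of $(S_A(0))_n=0$ for $n\geq 1$, so the zero lift works; yet this map is not a quasi-isomorphism, since $H_0$ jumps from $0$ to $A$. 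Worse, $0\to D_A(1)$ is also $I$-injective (a chain map $D_A(n)\to D_A(1)$ vanishing on $S_A(n-1)$ is forced to be zero, because the differential of $D_A(1)$ is injective in degree $1$), so $J\cap I\text{-inj}$ contains a non-isomorphism. If your condition (i) held, this map would lie in $I\text{-cof}\cap I\text{-inj}$, hence would have the lifting property against itself and so be an isomorphism---a contradiction. Thus both your claim (ii) and your claim that each $0\to D_A(n)$ lies in $I\text{-cof}$ fail; your parenthetical ``$I$-injective (hence surjective)'' is precisely what breaks, as $0\to D_A(1)$ shows.

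The repair is standard and small: enlarge $I$ by the map $0\to S_A(0)$ (equivalently, adopt the convention $S_A(-1)=0$ and $D_A(0)=S_A(0)$, letting $n$ range over $n\geq 0$ for $I$ but keeping $n\geq 1$ for $J$), as in Quillen's original treatment of non-negatively graded complexes. With this $I$, your diagram chase does yield $I\text{-inj}=\{\text{surjective quasi-isomorphisms}\}$: surjectivity in degree $0$ comes from lifting against $0\to S_A(0)$, surjectivity of $H_0$ follows, and then surjectivity in positive degrees and the isomorphism on higher homology come out of the cycle-versus-boundary chase you describe; moreover $J\subseteq I\text{-cof}$ now holds because $I$-injectives are surjective in all degrees. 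The rest of your argument (bicompleteness, smallness via local presentability, contractibility of the discs, homology commuting with the filtered colimits along the transfinite compositions) is correct. You should be aware that the defect is inherited from the statement itself---the paper transcribes Hovey's unbounded-complex theorem, where spheres and discs exist in every degree and the issue cannot arise---but a proof of the non-negatively graded statement must either add the missing generator or note that, as literally stated, no such model structure exists.
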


\begin{proof}[Proof of Proposition~\ref{modinMODA}]
We apply Theorem \ref{modelunder} to the slice category $Mod/T_A$ and the cofibrantly generated model structure on $dg\text{-}A\text{-}Mod^{\geq0}$ given by Theorem~\ref{modinMOD}.
\end{proof}

\subsection{The category $\bm{\holie}$}\label{catholie}

In this subsection we define the category of $L_\infty$-algebroids associated to a graded algebra $A$. In broad terms, such an algebroid is given by an $L_\infty$-algebra with a compatible $A$ module structure in the same spirit as a Lie algebroid in \cite{Kapranov}. In the following we consider non-negatively graded dg-modules.

\begin{df}\label{holiea}
\begin{enumerate}
\item An {\bf $\bm{L_\infty}$-algebra} is a module $L$ with an $L_\infty$-structure \cite[Definition 2]{Vitagliano2} given by degree $k-2$ anti-symmetric multibrackets
$$
[\ ]_k\colon \Lambda^k (L)\to L,
$$
with
$$
\sum_{i+j=k}(-1)^{ij}\sum_{\sigma\in S_{i,j}}\chi(\sigma, v)\left[[v_{\sigma(1)},\ldots,v_{\sigma(i)}]_i,v_{\sigma(n+1)},\ldots,v_{\sigma(i+j)}\right]_{j+1}=0.
$$
Here, $\chi(\sigma, v)$ is the sign for which $v_1\wedge\cdots \wedge v_k=\chi(\sigma, v)v_{\sigma(1)}\wedge\cdots\wedge v_{\sigma(k)}$.

We remind the reader that $\bigwedge^k L$ is the quotient of the graded vector space $\bigotimes^k L$ under the equivalence relation linearly generated by
$$
v_1\otimes\cdots\otimes v_k=-(-1)^{\lvert v_i\rvert\lvert v_{i+1}\rvert}v_1\otimes\cdots\otimes v_{i+1}\otimes v_i\otimes\cdots\otimes v_k
$$
for all $i$. As well, $S_{i,j}$ is the group of $(i,j)$-unshuffles, which is to say, the set of permutations $\sigma$ of $1,\ldots, k=i+j$ such that
$$
\sigma(1)<\cdots<\sigma(i) \text{ and }\sigma(i+1)<\cdots<\sigma(k).
$$

\item An $L_\infty$-algebra $L$ with an $A$-module structure (an {\bf $\bm{A\text{-}L_\infty}$-algebra}) is an {\bf $\bm{L_\infty}$-algebroid} if it has a compatible action on $A$. The action is specified by an $A$-module morphism (which we call the anchor)
$$
\rho\colon L\to T_A
$$
of degree $0$. The compatibility conditions for the anchor and brackets are given by:
\begin{gather*}
[v_0,av_1]_2=(-1)^{\lvert a\rvert\lvert v_0\rvert}a[v_0,v_1]_2+\rho(v_0)[a]v_1\\
\left[v_0,\ldots,av_{k}\right]_{k+1}=(-1)^{\lvert a\rvert \left(k+\lvert v_0\rvert+\cdots+\lvert v_{k-1}\rvert\right)}a\left[v_0,\ldots,v_{k}\right]_{k+1}
\end{gather*}
for $a$ in $A$ and $v_i\in L$.

We denote by $\bm{\holie}$ the category of $L_\infty$-algebroids with morphisms given by the $A$-module morphisms that preserve the multibrackets.
\end{enumerate}
\end{df}

\begin{rem}\label{remarkLR}
Since the higher brackets of $T_A$ all vanish, the data of an $L_\infty$-algebroid on an $A$-module $L$ is equivalent to that of an $L_\infty$-algebra structure on $L$ where the higher brackets are $A$-linear on the first coordinate, together with a strict morphism \cite[Definition 5]{Vitagliano2} of $L_\infty$-algebras to $T_A$.

An $L_\infty$-algebroid is a version up to homotopy of the Lie-Rinehart algebra defined in \cite[Section 1]{Kapranov}. The higher brackets and the jacobiators give the structure up to homotopy.
\end{rem}

\subsection{The free functor $\bm{LR}$}\label{freeLR}
In Proposition~\ref{free linf1} we describe a right adjoint for the forgetful functor
$$
F\colon \holie\to Mod/T_A,
$$
which takes an $L_\infty$-algebroid and gives back its underlying module. The existence of this functor is implicit throughout \cite[Section 3]{Nuiten}, but here we give an explicit construction.

We start by presenting the $L_\infty$-words on a fixed $M$ (not necessarily bounded) $A$-module ($A$ possibly differential graded): An {\bf $\bm{L_\infty^A}$-word} of arity $k$ is recursively defined as an element of
$$
A\otimes\left(\bigwedge^k M'\right)[k-2],
$$
where $M'$ is an $A$-module of some previously defined $L_\infty^A$-words. We represent one such word by a symbol
$$
a\left[v_0,\ldots,v_{k-1}\right]_k,
$$
where $a$ is an element in $A$, while the $v_i$ are all previously constructed $L_\infty^A$-words. The degree of $a\left[v_0,\ldots,v_{k-1}\right]_k$ is defined to be
$$
deg\left(a\left[v_0,\ldots,v_{k-1}\right]_k\right)=\sum_i deg(v_i)+ \lvert a\rvert +k-2.
$$

For non-negatively graded $M$ and non graded $A$, it is necessary to modify this definition. The recursive construction goes as follows: We start setting all the elements of $\lambda_0=M$ as $L_\infty^A$-words.

We can then form the symbols that involve elements of $\lambda_0$. The set of all such symbols forms an $A$-module which can be expressed as
$$
\lambda'_1\coloneqq \bigoplus_{k\geq 1} A\otimes \left(\bigwedge\nolimits^k \lambda_0\right)[k-2]/K_0.
$$
Here we take $\bigwedge^1 \lambda_{0}[1]$ to be $\lambda_{0}[1]$ and $K_0$ to be the $A$-submodule of negatively graded elements. Thus, the resulting module is non-negatively graded. Including the elements of $\lambda_0$ themselves we obtain
$$
\lambda_1\coloneqq \lambda_0\oplus \lambda'_1.
$$

Recursively, once defined $\lambda_n$, the $A$-module of all $L_\infty^A$-words with at most $n$ brackets, we define
$$
\lambda'_{n+1}\coloneqq \bigoplus_{k\geq 1} A\otimes \left(\bigwedge\nolimits^k \lambda_{n}\right)[k-2]/K_n,
$$
where again, $K_n$ is the $A$-submodule of negatively graded elements, and then
$$
\lambda_{n+1}\coloneqq \lambda_0\oplus \lambda'_{n+1}.
$$
It is clear from the construction that $\lambda_n\subset \lambda_{n+1}$ for all $n$.

\begin{df}
Given an $A$-module $M$, we define the {\bf set of $\bm{L_\infty^A}$-words on $\bm{M}$} as the colimit of $A$-modules
$$
L(M)=colim\ \lambda_{i}.
$$
\end{df}

Given an element $v\in \lambda_n$, we say that $w\in \lambda_{n+1}$ is {\bf directly over} $v$ if there are $v_1,\ldots,v_{k-1}\in \lambda_n$ and $a$ in $A$ such that $w=[a v,v_1,\ldots,v_{k-1}]_k$. We say that {\bf$\bm{w}$ is over $\bm{v}$} if there is a sequence $v=v_0,\ldots,v_k=w$ such that $v_{i+1}$ is directly over $v_i$.

\begin{df}
The {\bf free $\bm{L_\infty^A}$-algebra} associated to $M$ is the $A$-module given by
$$
L_\infty^A(M)\coloneqq L(M)/\sim,
$$
which is the quotient of the set of $L_\infty^A$-words of $M$ under $\sim$, which is the $A$-module generated by all the elements over the jacobiators
$$
\sum_{i+j=k}(-1)^{ij}\sum_{\sigma\in S_{i,j}}\chi(\sigma, v)\left[[v_{\sigma(1)},\ldots,v_{\sigma(i)}]_i,v_{\sigma(n+1)},\ldots,v_{\sigma(i+j)}\right]_{j+1}.
$$
with $\chi(\sigma,v)$ the sign for which $v_1\wedge\cdots \wedge v_k=\chi(\sigma, v)v_{\sigma(1)}\wedge\cdots\wedge v_{\sigma(k)}$.

The $L_\infty$-brackets on $L_\infty^A(M)$ are defined in the obvious way: given $k+1$ elements of $L_\infty^A(M)$ represented by $L_\infty^A$ words $v_0,\ldots,v_k$, their bracket is given by the class represented by $[v_0,\ldots, v_k]_{k+1}$.

These brackets are well defined on $L_\infty^A(M)$ and their jacobiators vanish from construction. 
\end{df}

In the construction of $L_\infty^A(M)$ we did not make use of the internal differential structure nor the anchors of the module $M$. These are considered below in the construction of the free $L_\infty$-algebroid associated to $M$.

\begin{prop}\label{free linf1}
Given a graded $A$-Module $M$, the construction $L_\infty^A(M)$ is functorial and is a left adjoint to the forgetful functor from the category of $A$-modules with an $L_\infty$-algebra structure and strict morphisms of $A$-modules,  to the category of $A$-Modules.
\end{prop}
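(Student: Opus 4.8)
The plan is to verify that $L_\infty^A$, together with the inclusion of generators, furnishes a free object, i.e. to produce for each graded $A$-module $M$ a unit $\eta_M\colon M\to U(L_\infty^A(M))$ and to show it is universal among maps from $M$ into the underlying module of an $A$-$L_\infty$-algebra, where $U$ denotes the forgetful functor to $A$-modules. The unit is simply the inclusion $\lambda_0=M\hookrightarrow L(M)\twoheadrightarrow L_\infty^A(M)$. Concretely I must exhibit, for every $A$-$L_\infty$-algebra $L$ and every $A$-module morphism $g\colon M\to U(L)$, a unique strict $L_\infty$-morphism $\widetilde g\colon L_\infty^A(M)\to L$ with $\widetilde g\circ\eta_M=g$, and then check that the resulting bijection $\mathrm{Hom}(L_\infty^A(M),L)\cong\mathrm{Hom}_{A\text{-Mod}}(M,U(L))$ is natural.

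First I would construct $\widetilde g$ on the level of words by recursion along the filtration $\lambda_0\subset\lambda_1\subset\cdots$. Set $\widetilde g|_{\lambda_0}=g$. Assuming $\widetilde g$ has been defined on $\lambda_n$ compatibly with degrees, extend it to $\lambda_{n+1}$ by the strictness formula
$$
\widetilde g\bigl([a\,v_0,v_1,\dots,v_{k-1}]_k\bigr)=\bigl[a\,\widetilde g(v_0),\widetilde g(v_1),\dots,\widetilde g(v_{k-1})\bigr]_k,
$$
where the bracket on the right is the $k$-ary bracket of $L$. The degree shift $[k-2]$ in the definition of $\lambda'_{n+1}$ matches the degree of the $k$-ary bracket of $L$, so $\widetilde g$ is degree preserving; the quotient by $K_n$ (negatively graded elements) causes no trouble because, in the non-negatively graded setting, the brackets land in non-negative degree as well. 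Functoriality of $L_\infty^A$ is the special case $L=L_\infty^A(N)$, $g=\eta_N\circ f$, which assigns to $f\colon M\to N$ the induced strict morphism on words; respect for identities and composition is immediate from the recursive formula.

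The main obstacle is well-definedness, namely that $\widetilde g$ descends from the $A$-module of words $L(M)$ to the quotient $L_\infty^A(M)=L(M)/\sim$. By construction $\sim$ is generated by all elements lying over the jacobiators, so it suffices to show $\widetilde g$ annihilates every such element. Applying the recursive formula, the image under $\widetilde g$ of an element over a jacobiator is (up to the sign $\chi(\sigma,v)$ and the $A$-action, both of which $\widetilde g$ preserves) the corresponding higher Jacobi expression in $L$ evaluated on $\widetilde g(v_0),\dots$. Since $L$ is an $A$-$L_\infty$-algebra, this expression vanishes identically; hence $\widetilde g$ kills the generators of $\sim$, and because $\widetilde g$ is $A$-linear it kills the whole submodule. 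This is precisely where the hypothesis that $L$ satisfies the $L_\infty$-relations is used, and it is the crux of the argument. The same $A$-linearity and the compatibility of the bracket degrees guarantee that $\widetilde g$ is a well-defined morphism of $A$-modules, and it is strict by the very formula defining it.

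It remains to check uniqueness and naturality. Any strict $L_\infty$-morphism $\phi\colon L_\infty^A(M)\to L$ with $\phi\circ\eta_M=g$ must satisfy $\phi([a\,v_0,\dots,v_{k-1}]_k)=[a\,\phi(v_0),\dots,\phi(v_{k-1})]_k$ by strictness together with $A$-linearity in the first slot; an induction along the filtration then forces $\phi=\widetilde g$, giving uniqueness. Naturality in $M$ follows because both $\widetilde{(-)}$ and the restriction to generators are computed by the same recursive formula, so the relevant square commutes on $\lambda_0$ and hence, by the same induction, everywhere; naturality in $L$ is immediate since postcomposition with a strict morphism commutes with the bracket formula. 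Together these yield the desired natural bijection, establishing that $L_\infty^A$ is left adjoint to $U$.
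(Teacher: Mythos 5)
Your proposal is correct and takes essentially the same route as the paper: the same recursive, strictness-forced extension on $L_\infty^A$-words, descent to the quotient because the $L_\infty$-relations in the target annihilate (elements over) the jacobiators, and uniqueness by induction along the filtration $\lambda_0\subset\lambda_1\subset\cdots$. The only differences are organizational --- you obtain functoriality as the special case $L=L_\infty^A(N)$ of the universal property, whereas the paper constructs $L_\infty^A(f)$ directly first --- and your handling of the descent and of the quotient by $K_n$ is more explicit than the paper's one-line justification.
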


\begin{proof}
Given a morphism of $A$-modules $f\colon M\to N$, we give a morphism of $A$-modules $L_\infty^A(f)\colon L_\infty^A(M)\to L_\infty^A(N)$ by defining it on the level of $L_\infty^A$-words: we start by setting $L(f)m\coloneqq f(m)$ for $m$ in $M$, and recursively define
\begin{equation}\label{equation}
L(f)a\left[v_0,\ldots, v_{k-1}\right]_k\coloneqq a\left[L(f)v_0,\ldots, L(f)v_{k-1}\right]_k.
\end{equation}
This is clearly a morphisms of $A$-modules that preserves jacobiators. Therefore, it descends to the required morphism $L_\infty^A(f)\colon L_\infty^A(M)\to L_\infty^A(N)$. Since it preserves the $L_\infty$-brackets by construction, it is a strict $L_\infty$-morphism.

The adjunction property can be verified using that $M$ is an $A$-submodule of $L_\infty^A(M)$. Therefore, given any $A$-module, bracket preserving morphism ({\bf$\bm{A\text{-}L_\infty}$-morphism}) $F\colon L_\infty^A(M)\to L$, the restriction to $M$ is an $A$-module morphism.

As well, given an $A\text{-}L_\infty$-algebra $L$, Equation~\ref{equation} above defines the unique strict $L_\infty$-extension to $L_\infty^A(M)$ of a given $A$-module morphism $f\colon M\to L$. The verification that these two assignations follows from the construction of the $L_\infty$-extensions.
\end{proof}

\begin{rem}
Proposition~\ref{free linf1} shows that $L_\infty^A(M)$ is a free object associated to $M$ in $A\text{-}L_\infty$-algebras.
\end{rem}

\begin{rem}\label{anchors}
If $\alpha\colon M\to T_A$ is an anchored module, so is $L_\infty^A(M)$. In fact, we obtain an $A\text{-}L_\infty$-morphism $\tilde{\alpha}\colon L_\infty^A(M)\to T_A$ from $\alpha$ through the described adjunction
\begin{center}
\begin{tikzcd}
A\text{-}L_\infty \arrow[bend left=35]{r}[name=F]{F} & Mod \arrow[bend left=35]{l}[name=L]{L}
\end{tikzcd}
\end{center}
Here we consider the dg-Lie algebra $T_A$ as an $A$-module with the $L_\infty$-structure coming from the Lie bracket.
\end{rem}

\begin{prop}\label{freelr}
Let $\alpha\colon M\to T_A$ be in $Mod/A$ with differential $d$. The $A$-module over $T_A$ given by the quotient
$$
LR(M)\coloneqq L_\infty^A(M)/\sim
$$
is an $L_\infty$-algebroid. The anchor $\rho\colon LR(M)\to T_A$ is the quotient function of the map $\hat{\alpha}\colon L_\infty^A(M)\to T_A$ described in Remark~\ref{anchors}.

Here, $\sim$ is the $A\text{-}L_\infty$-ideal generated by the relations
\begin{gather*}
[v]_1=d(v)\\
[v_0,av_1]_2=
a[v_0,v_1]_2+\hat{\alpha}(v_0)(a)v_1\\
\left[v_0,\ldots,av_{k}\right]_{k+1}=
a\left[v_0,\ldots,v_{k}\right]_{k+1}
\end{gather*}
for $v$ in $M$, the $v_i$ in $L_\infty^A(M)$, and $a$ in $A$.

Finally, this construction is functorial and gives a right adjoint for the forgetful functor $F\colon \holie\to Mod/A$.
\end{prop}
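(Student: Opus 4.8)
The plan is to obtain both the algebroid structure on $LR(M)$ and its universal property by descending everything from the free $A\text{-}L_\infty$-algebra $L_\infty^A(M)$ of Proposition~\ref{free linf1} along the quotient by $\sim$. First I would check that $LR(M)$ is a well-defined $L_\infty$-algebroid. By construction $\sim$ is the $A$-submodule generated by the three displayed families of relations \emph{together with} all of their multibrackets against arbitrary $L_\infty^A$-words; this is exactly the condition that $\sim$ be an $L_\infty$-ideal, so that the multibrackets of $L_\infty^A(M)$ descend to $LR(M)=L_\infty^A(M)/\sim$. Since the jacobiators already vanish in $L_\infty^A(M)$ by the construction underlying Proposition~\ref{free linf1}, they vanish on the quotient, so $LR(M)$ is an $A\text{-}L_\infty$-algebra. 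It then remains to match the axioms of Definition~\ref{holiea}: the relation $[v]_1=d(v)$ makes the unary bracket the internal differential, the relation $[v_0,av_1]_2=a[v_0,v_1]_2+\hat{\alpha}(v_0)(a)v_1$ is the Leibniz compatibility, and the last relation imposes $A$-linearity of the higher brackets. Because $A$ is concentrated in degree $0$ the sign $(-1)^{|a||v_0|}$ of Definition~\ref{holiea} is trivial, so these relations reproduce the compatibility axioms verbatim.

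Next I would descend the anchor. The map $\hat{\alpha}\colon L_\infty^A(M)\to T_A$ of Remark~\ref{anchors} is a strict $A\text{-}L_\infty$-morphism, and I would verify that it is constant on $\sim$-cosets by testing it on each generator. The unary and higher relations are immediate because $T_A$ has vanishing differential and vanishing higher brackets. The essential point is the binary relation: applying $\hat{\alpha}$ and using the derivation identity $[X,aY]=a[X,Y]+X(a)Y$ in the Lie algebra $T_A$ gives $\hat{\alpha}([v_0,av_1]_2)=a[\hat{\alpha}(v_0),\hat{\alpha}(v_1)]+\hat{\alpha}(v_0)(a)\hat{\alpha}(v_1)$, which is exactly $\hat{\alpha}(a[v_0,v_1]_2+\hat{\alpha}(v_0)(a)v_1)$. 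Hence $\hat{\alpha}$ factors through a morphism $\rho\colon LR(M)\to T_A$, which is the sought anchor and is by construction the quotient of $\hat{\alpha}$; this settles the first two claims.

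Finally I would treat functoriality and the adjunction together. For a morphism $f\colon M\to N$ in $Mod/T_A$, I would check that $L_\infty^A(f)$ of Proposition~\ref{free linf1} carries $\sim_M$ into $\sim_N$: the chain-map property of $f$ handles the unary relation, while the identity $\hat{\alpha}_N\circ L_\infty^A(f)=\hat{\alpha}_M$, which follows from $\alpha_N\circ f=\alpha_M$ and the uniqueness clause of Proposition~\ref{free linf1}, handles the Leibniz and linearity relations; thus $LR(f)$ is well defined and functoriality follows. To establish the adjunction I would exhibit the natural bijection $\mathrm{Hom}_{\holie}(LR(M),L)\cong\mathrm{Hom}_{Mod/T_A}(M,F(L))$. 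Precomposing a morphism $LR(M)\to L$ with the canonical map $M\to LR(M)$ yields an anchored chain map, the relation $[v]_1=d(v)$ forcing compatibility with the differentials. Conversely, an anchored module map $g\colon M\to F(L)$ extends uniquely to the strict morphism $\tilde{g}\colon L_\infty^A(M)\to L$ of Proposition~\ref{free linf1}; because $L$ is itself an algebroid, $\tilde{g}$ kills each generator of $\sim_M$, the Leibniz check again being the only nontrivial one, where one uses $\rho_L\circ\tilde{g}=\hat{\alpha}_M$ to identify $\rho_L(\tilde{g}v_0)(a)$ with $\hat{\alpha}_M(v_0)(a)$, so that $\tilde{g}$ descends to $LR(M)$. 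These two assignments are mutually inverse and natural, realizing the free--forgetful adjunction.

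The main obstacle I anticipate is not conceptual but the repeated verification of the anchor-twisted Leibniz relation in three guises: for $\hat{\alpha}$, for the functorial maps $L_\infty^A(f)$, and for the extensions $\tilde{g}$. In each case the computation reduces to the single fact that the anchor is a derivation, which is what forces the twist term $\hat{\alpha}(v_0)(a)v_1$, combined with the uniqueness of strict extensions from Proposition~\ref{free linf1} to guarantee that anchors compose correctly. Once $\sim$ is recognized as an $L_\infty$-ideal, everything else is formal.
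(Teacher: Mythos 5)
Your proposal is correct and follows essentially the same route as the paper's proof: the brackets and anchor descend from $L_\infty^A(M)$ because $\sim$ is an $A\text{-}L_\infty$-ideal killed by $\hat{\alpha}$, the quotient relations are precisely the compatibility axioms of Definition~\ref{holiea}, functoriality comes from the commutative triangle over $T_A$ supplied by Proposition~\ref{free linf1}, and the adjunction mirrors that proposition's argument. You simply spell out the verifications (the derivation identity in $T_A$, the identity $\hat{\alpha}_N\circ L_\infty^A(f)=\hat{\alpha}_M$, and the explicit unit/counit bijection) that the paper leaves implicit.
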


\begin{rem}
An {\bf$\bm{A\text{-}L_\infty}$-ideal} in an $A\text{-}L_\infty$-algebra $L$ generated by a set $S$ is the smallest $A$-submodule of $L$ that is an ideal for the $L_\infty$-brackets.
\end{rem}

\begin{proof}
From the definition of the quotient, the $L_\infty$-brackets of $L_\infty^A(M)$ descend to $L_\infty$-brackets on $LR(M)$ and, since $T_A$ is already an $L_\infty$-algebroid, so does the anchor.

The compatibility conditions for the brackets and anchor from Definition~\ref{holiea} are exactly the quotient relations and so, $LR(M)$ is itself an $L_\infty$-algebroid.

Given $\alpha_M\colon M\to T_A$ and $\alpha_N\colon N\to T_A$ and $f\colon M\to N$ in $Mod/A$, from Proposition~\ref{free linf1} we obtain a commutative diagram of $A\text{-}L_\infty$-algebras
\begin{center}
    \begin{tikzcd}[column sep=small]
    L_\infty^A(M)
    \arrow[rr, "L(f)"]
    \arrow[rd, swap, "\hat{\alpha}_{M}"]
    && L_\infty^A(N)
    \arrow[ld, "\hat{\alpha}_{N}"]
    \\
    & T_A
    \end{tikzcd}
\end{center}
This induces a morphim $LR(f)\colon LR(M)\to LR(N)$ in $\holie$.

The proof that the functors $F\colon \holie\leftrightarrow Mod/A\cocolon LR$ are adjoint to each other is similar to that in the proof of Proposition~\ref{free linf1}.
\end{proof}

\begin{df}\label{freehlr}
We define the {\bf free $\bm{L_\infty}$-algebroid} associated to $\alpha\colon M\to T_A$ in $Mod/A$ to be the $L_\infty$-algebroid $LR(M)$.
\end{df}


\section{Applications to singular foliations}\label{app}
In this section we propose results  parallel to those of \cite[Section 1]{Lavau} for Lie $\infty$-algebroids in terms of the (semi-)model category structure on $\holie$ given by Theorem~\ref{transfer}. We first show how singular foliations and Lie $\infty$-algebroids \cite{Lavau} are examples of $L_\infty$ algebroids. With this connection in mind, we then compare each statement of \cite{Lavau} with its semi-model category analogue that we introduce and prove as corollaries of our main technical lemma \ref{machinerysemimodel}. We conclude with open questions raised by the discrepancies between the two sets of results.

\subsection{Universal Lie $\bm{\infty}$-algebroids}
In this subsection, in order to make the connection with the semi-model structure on $L_\infty/\mathcal{O}_M$,  we show that both Lie $\infty$-algebroids and singular foliations can be seen as objects in $L_\infty/\mathcal{O}_M$. We fix a smooth manifold $M$ and its ring of smooth functions $\mathcal{O}_M$. Recall

\begin{df}\cite[Definition 1.13]{Lavau}
A {\bf Lie $\infty$-algebroid} is a non-positively graded differential vector bundle over a manifold $M$ whose space of sections has a strong homotopy Lie-Rinehart algebra structure. This is to say, an $LR_\infty[1]$-algebra in the sense of \cite[Definition 7]{Vitagliano2} over the algebra $\mathcal{O}_M$ of smooth functions on $M$.
\end{df}

\begin{rem}
In \cite{Lavau}, the definitions are given in terms of non-positive cohomological chain complexes. We can directly transport our results to this setting by using the isomorphim of this category to that of non-negative homological chain complexes.
\end{rem}

Then one has

\begin{lem}\label{inclusion}
The de-suspension of the space of sections of an $\infty$-algebroid forms an $L_\infty$-algebroid.
\end{lem}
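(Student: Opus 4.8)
The plan is to peel apart the two pieces of data packaged in Lavau's definition --- the strong homotopy Lie structure and the Lie--Rinehart (module-plus-anchor) structure --- and to check that, after de-suspension, each matches the corresponding datum in Definition~\ref{holiea}. The central tool is the \emph{décalage} isomorphism, which translates the ``shifted'' conventions of an $LR_\infty[1]$-algebra (graded-symmetric brackets on $L[1]$, all of the same degree) into the ``unshifted'' conventions of Definition~\ref{holiea} (graded-antisymmetric multibrackets $[\ ]_k\colon\Lambda^k L\to L$ of degree $k-2$). Since an $LR_\infty[1]$-structure is by design the $[1]$-shifted object, its de-suspension is exactly the unshifted $L_\infty$-algebroid we want to produce.

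First I would recall, following \cite{Vitagliano2}, that an $LR_\infty[1]$-structure on the sections $\Gamma(E)$ amounts to a codifferential on the cofree graded-cocommutative coalgebra cogenerated by $\Gamma(E)$, together with a compatible Lie--Rinehart action. Writing $L=\Gamma(E)[-1]$ and applying the de-suspension functor, I would invoke the standard décalage isomorphism $S^k(L[1])\cong(\Lambda^k L)[k]$ (up to the Koszul sign). Under this identification the graded-symmetric brackets on $L[1]$ correspond precisely to graded-antisymmetric multibrackets $[\ ]_k$ of degree $k-2$ on $L$, and the codifferential condition becomes exactly the higher Jacobi relation displayed in Definition~\ref{holiea}.

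Next I would match the remaining data. The underlying object $\Gamma(E)$ is an $\mathcal{O}_M$-module, so with $A=\mathcal{O}_M$ the de-suspension $L=\Gamma(E)[-1]$ is an $A$-module as required. The Lie--Rinehart datum supplies an anchor $\rho\colon L\to\Tsmooth$ valued in derivations of $\mathcal{O}_M$, and the Leibniz-type axioms of a strong homotopy Lie--Rinehart algebra become, after de-suspension, precisely the compatibility conditions between $\rho$ and the brackets listed in Definition~\ref{holiea}; equivalently, by Remark~\ref{remarkLR}, $\rho$ is a strict $L_\infty$-morphism to $\Tsmooth$ and the higher brackets are $A$-linear in the first slot. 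Finally, since Lavau works with non-positively graded (cohomological) complexes whereas Definition~\ref{holiea} uses non-negatively graded (homological) ones, I would pass through the isomorphism of these two categories noted in the remark preceding the statement to land in $\holie$.

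The main obstacle will be the sign bookkeeping through the décalage isomorphism: verifying that the graded-symmetric $LR_\infty[1]$-brackets transport to the graded-antisymmetric degree-$(k-2)$ brackets with exactly the signs appearing in the higher Jacobi identity of Definition~\ref{holiea}, and that the Koszul signs produced by the shift align the Leibniz rules with the anchor-compatibility conditions. This is routine but delicate, and is the only place where genuine care is needed.
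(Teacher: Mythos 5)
Your overall strategy---unpacking the $LR_\infty[1]$-data and transporting it through d\'ecalage---is reasonable, and the sign bookkeeping you flag as the ``only place where genuine care is needed'' is indeed routine. The genuine gap is elsewhere, in the step where you ``match the remaining data.'' You assert that \emph{the Lie--Rinehart datum supplies an anchor $\rho\colon L\to\Tsmooth$}. But in Vitagliano's Definition 7, which is the definition Lavau's Lie $\infty$-algebroids are phrased in, the Lie--Rinehart datum is not a single map: a strong homotopy Lie--Rinehart algebra carries a \emph{homotopy} action of $L$ on the base algebra, i.e.\ a whole family of higher action maps (higher anchors) of every arity. Definition~\ref{holiea}, by contrast, demands a \emph{strict} anchor: one degree-$0$ $A$-module morphism $\rho\colon L\to T_A$ satisfying strict Leibniz-type compatibilities. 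So the two packages of data do not correspond term by term a priori, and for a general graded base algebra the identification you describe would simply be false; your ``matching'' step silently replaces a homotopy action by a strict one.

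What rescues the lemma---and is the actual content of the paper's proof---is a degree argument specific to the case at hand: since $\mathcal{O}_M$, and hence $\Tsmooth$, are concentrated in degree $0$, the higher-arity components of the homotopy action have nowhere to land (their prescribed degree shifts are incompatible with a source and target concentrated in the degrees available), so they vanish identically. Only the arity-one term survives, and it is exactly the strict, degree-$0$ anchor required by Definition~\ref{holiea}; the remaining higher Jacobi/Leibniz identities of the $LR_\infty[1]$-structure then reduce to the compatibility conditions you list (equivalently, via Remark~\ref{remarkLR}, to strictness of the induced $L_\infty$-morphism to $\Tsmooth$). Your proof needs this vanishing argument inserted at the point where you pass from ``Lie--Rinehart datum'' to ``anchor''; without it, the claim that the axioms ``become precisely the compatibility conditions'' of Definition~\ref{holiea} is unjustified.
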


\begin{proof}
 The compatibility conditions of the anchor and brackets in \cite[Definition 1.13]{Lavau} are the higher Jacobi equations for the $LR_\infty[1]$-algebra. In this case, since both $\mathcal{O}_M$ and $T_{\mathcal{O}_M}$ are concentrated in degree 0, the action of the algebroid on $\mathcal{O}_M$ has no higher terms. Therefore, taking the $L_\infty$-algebroid associated to this $LR_\infty[1]$-algebra we obtain (after desuspension) an object in $L_\infty/\mathcal{O}_M$.
 \end{proof}

We can also express the notion of  singular foliation in terms of $L_\infty$-algebroid. First recall 

\begin{df}
A {\bf singular foliation} over $M$ is a locally finitely generated $\mathcal{O}_M$-submodule of $\mathfrak{X}(M)$ that is closed under the Lie bracket.
\end{df}

 Since $T_{\mathcal{O}_M}=\mathfrak{X}(M)$ (with the $L_\infty/{\mathcal{O}_M}$-structure given by the commutator of derivations and anchor the identity) is just $\mathfrak{X}(M)$, one can restate the definition in terms of the $L_\infty$-structure.

\begin{lem}
A singular foliation $\mathcal{F}$ over $M$ is a locally finitely generated $\mathcal{O}_M$-module $\mathcal{F}\subset\Tsmooth$ that is closed under the $L_\infty/{\mathcal{O}_M}$-structure of $\Tsmooth$.
\end{lem}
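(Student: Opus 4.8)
The plan is to unwind the $L_\infty/\mathcal{O}_M$-structure on $\Tsmooth$ and observe that, because of its degenerate form, closure under this structure reduces exactly to closure under the Lie bracket appearing in the classical definition. First I would recall the explicit shape of this structure. Since $\Tsmooth=\mathfrak{X}(M)$ is concentrated in degree $0$, its differential $[\ ]_1$ vanishes; by Remark~\ref{remarkLR} all higher brackets of $\Tsmooth$ vanish as well, so the only nonzero operation is the binary bracket $[\ ]_2$, which is the commutator of derivations, i.e. the Lie bracket of vector fields. The anchor is the identity $\Tsmooth\to\Tsmooth$.

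Next I would make precise what ``closed under the $L_\infty/\mathcal{O}_M$-structure'' means for an $\mathcal{O}_M$-submodule $\mathcal{F}\subset\Tsmooth$: the restrictions of $[\ ]_1$ and of every multibracket $[\ ]_k$ to $\mathcal{F}$ must take values in $\mathcal{F}$, and $\mathcal{F}$ must be compatible with the anchor. Since $[\ ]_1=0$ and $[\ ]_k=0$ for all $k\neq 2$, the only nontrivial requirement among these is that $[\ ]_2$ restrict to a map $\Lambda^2\mathcal{F}\to\mathcal{F}$, which is precisely the condition that $\mathcal{F}$ be closed under the Lie bracket. The anchor condition is automatic, because the anchor is the identity and $\mathcal{F}\subset\Tsmooth$, so its restriction to $\mathcal{F}$ is simply the inclusion and lands in $\Tsmooth$ by hypothesis. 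The $\mathcal{O}_M$-module structure and the local finite generation are carried over verbatim, being conditions on $\mathcal{F}$ as a submodule.

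With these observations the two formulations coincide: a locally finitely generated $\mathcal{O}_M$-submodule $\mathcal{F}\subset\Tsmooth$ closed under the Lie bracket is exactly the same data as one closed under the $L_\infty/\mathcal{O}_M$-structure of $\Tsmooth$. I do not expect any genuine obstacle here; the lemma is a reformulation whose entire content is the bookkeeping observation that each degenerate piece of the structure (differential, higher brackets, anchor) imposes no condition beyond Lie-bracket closure, and this rests solely on the fact that $\mathcal{O}_M$ and $\Tsmooth$ sit in degree $0$ with all structure maps above the binary bracket trivial.
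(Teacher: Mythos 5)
Your proposal is correct and follows exactly the reasoning the paper itself uses (though the paper compresses it into the single preceding sentence): since $\Tsmooth=\mathfrak{X}(M)$ is concentrated in degree $0$, its $L_\infty/\mathcal{O}_M$-structure degenerates to the commutator bracket with identity anchor, so closure under the structure is precisely closure under the Lie bracket. Your spelled-out bookkeeping of the vanishing differential, vanishing higher brackets, and trivial anchor condition is just a more explicit version of the same argument.
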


\subsection{Cofibrant replacements in $\bm{L_\infty/{\mathcal{O}_M}}$}\label{cofibrantreplacement}

We can now show that results similar to those of \cite{Lavau} can be understood in terms of the model structure on $Mod/T_{\mathcal{O}_M}$ (see Proposition~\ref{modinMODA}) and the semi-model structure on $L_\infty/\mathcal{O}_M$ (see Theorem~\ref{transfer}).

We first recall the definition of a resolution of a singular foliation \cite[Definition 1.1]{Lavau}:

\begin{df}
A {\bf resolution of a singular foliation $\mathcal{F}$} is a complex of vector bundles for which the cohomology of the associated complex of sections gives the  $\mathcal{O}_M$-module $\mathcal{F}$.
\end{df}

The model structure on $Mod/T_{\mathcal{O}_M}$ given by Proposition~\ref{modinMODA} is such that:

\begin{lem}
A resolution of a singular foliation $\mathcal{F}$ is a cofibrant replacement in $Mod/T_{\mathcal{O}_M}$.
\end{lem}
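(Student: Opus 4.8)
The plan is to produce from a resolution $E_\bullet$ of $\mathcal{F}$ an object of $Mod/\Tsmooth$ together with an acyclic fibration onto $\mathcal{F}$, and then to verify that this object is cofibrant. First I would equip $E_\bullet$ (the complex of sections of the resolving vector bundles, placed in non-negative homological degree after the identification of conventions noted above) with an anchor. Because $E_\bullet$ resolves $\mathcal{F}$, its homology is $\mathcal{F}$ in degree $0$ and vanishes in positive degrees, so there is an augmentation $\varepsilon\colon E_0\twoheadrightarrow \mathcal{F}$ with $\varepsilon\circ d=0$ on $E_1$. Composing $\varepsilon$ with the inclusion $\mathcal{F}\hookrightarrow \Tsmooth$ and extending by zero in positive degrees yields a map $\rho\colon E_\bullet\to\Tsmooth$ of degree $0$; since $\Tsmooth$ is concentrated in degree $0$ and $\rho\circ d=0$, this $\rho$ is a chain map, so $(E_\bullet,\rho)$ is an object of $Mod/\Tsmooth$. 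Viewing $\mathcal{F}$ as the anchored module given by $\mathcal{F}\hookrightarrow\Tsmooth$ concentrated in degree $0$, the augmentation $\varepsilon$ defines a morphism from $(E_\bullet,\rho)$ to this object in the slice category, since the relevant triangle commutes by construction.

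Next I would check that this morphism is an acyclic fibration. By Theorem~\ref{modelunder} the weak equivalences and fibrations of $Mod/\Tsmooth$ are detected on underlying modules, so it suffices to treat $\varepsilon$ in $dg\text{-}\mathcal{O}_M\text{-}Mod^{\geq0}$. The map $\varepsilon$ is a quasi-isomorphism precisely because $E_\bullet$ resolves $\mathcal{F}$, and it is surjective (the quotient map in degree $0$, and trivially in positive degrees where $\mathcal{F}$ vanishes). By Theorem~\ref{modinMOD} these are exactly the weak equivalences and fibrations, so $\varepsilon$ is an acyclic fibration.

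The principal obstacle is cofibrancy. The initial object of $Mod/\Tsmooth$ is $(0\to\Tsmooth)$, and by Theorem~\ref{modelunder} the map from it to $(E_\bullet,\rho)$ is a cofibration if and only if $0\to E_\bullet$ is a cofibration in $dg\text{-}\mathcal{O}_M\text{-}Mod^{\geq0}$; that is, $(E_\bullet,\rho)$ is cofibrant exactly when $E_\bullet$ is cofibrant in the projective model structure of Theorem~\ref{modinMOD}. Here I would invoke two facts. First, by the Serre--Swan theorem the sections of a smooth vector bundle form a finitely generated projective $\mathcal{O}_M$-module, so $E_\bullet$ is a non-negatively graded complex of projective $\mathcal{O}_M$-modules. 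Second, every bounded-below complex of projectives is cofibrant in this model structure: one shows that $0\to E_\bullet$ has the left lifting property against any trivial fibration $q\colon X\twoheadrightarrow Y$ by building a lift degreewise, using projectivity of each $E_n$ together with the surjectivity and acyclicity of $q$ to solve the lifting equations inductively. Combining these gives that $E_\bullet$ is cofibrant, so $(E_\bullet,\rho)$ together with $\varepsilon$ is a cofibrant replacement of $\mathcal{F}$ in $Mod/\Tsmooth$.

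I would emphasize that the only genuinely non-formal inputs are the identification of vector-bundle sections with finitely generated projective modules and the standard characterization of cofibrant objects as bounded-below complexes of projectives; everything else is an unwinding of the slice model structure of Proposition~\ref{modinMODA}. The degreewise lifting argument is routine but is where care is needed, since it must use both the surjectivity and the vanishing homology of $q$ to produce a chain-level lift.
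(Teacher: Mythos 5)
Your proof is correct. The paper in fact states this lemma with no proof at all, so there is nothing to compare against line by line; your argument supplies exactly the reasoning the authors leave implicit: Theorem~\ref{modelunder} reduces cofibrations, fibrations and weak equivalences in $Mod/T_{\mathcal{O}_M}$ to the underlying complexes, the augmentation of a resolution onto $\mathcal{F}$ (composed with $\mathcal{F}\hookrightarrow T_{\mathcal{O}_M}$ to produce the anchor) is a surjective quasi-isomorphism, and cofibrancy of the complex of sections follows from Serre--Swan together with the fact that a non-negatively graded complex of projectives is cofibrant in the projective model structure of Theorem~\ref{modinMOD}. The one point worth keeping explicit is that this last fact genuinely uses boundedness below --- degreewise projectivity does not imply cofibrancy for unbounded complexes --- and your inductive lifting argument, which invokes both surjectivity and acyclicity of the kernel of the trivial fibration, is precisely where that hypothesis enters.
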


We remind the reader of the definition of cofibrant replacements in a (semi-) model structure.

\begin{df}\label{cofibrantrep}
A cofibrant replacement of an object $X$ in a (semi-) model category $\mathcal{C}$ with initial object $0$ is an object $QX$ such that there is a factorization of the initial arrow $0\to X$ as $0\xrightarrow{i}QX\xrightarrow{p} X$, with $i$ a cofibration and $p$ a trivial fibration.
\end{df}

The following remark is the main reason for us to work with $L_\infty$-algebroids instead of Lie-$\infty$ algebroids:
\begin{rem}
Resolutions of singular foliations do not always exist (see section 1.1 of $\cite{Lavau}$), while the existence of a cofibrant replacement in $Mod/\mathcal{O}_M$ of a singular foliation is guaranted by the axiom $SM3$ of a semi-model category. 
\end{rem}

If a resolution of $\mathcal{F}$ exists, on can consider the question of equipping it with a Lie-$\infty$ algebroid structure.

\begin{df}\cite[Definition 1.5]{Lavau}\label{universal}
A resolution of $\mathcal{F}$ is called a {\bf universal Lie-$\infty$ algebroid over $\mathcal{F}$} if the differential on the sections can be extended to a homotopy Lie-Rinehart structure.
\end{df}

One of the main results of \cite{Lavau} (Theorem 1.6) reads: 

\begin{thm}
If $\mathcal{F}$ admits a resolution, then this resolution can be upgraded to a universal Lie-$\infty$ algebroid over $\mathcal{F}$.
\end{thm}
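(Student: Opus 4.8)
The plan is to produce the universal Lie-$\infty$ algebroid structure on the given resolution by transporting onto it the $L_\infty$-algebroid structure of a cofibrant replacement of $\mathcal{F}$ computed inside the semi-model category $L_\infty/\mathcal{O}_M$ of Theorem~\ref{transfer}. I first regard $\mathcal{F}$ as an object of $L_\infty/\mathcal{O}_M$: it sits in degree $0$, with vanishing unary bracket, binary bracket the restriction of the commutator of vector fields, and anchor the inclusion $\mathcal{F}\hookrightarrow T_{\mathcal{O}_M}$; its image under the forgetful functor $F$ is the anchored module $(\mathcal{F}\hookrightarrow T_{\mathcal{O}_M})$. By the lemma identifying resolutions with cofibrant replacements in $Mod/T_{\mathcal{O}_M}$, the given resolution is a cofibrant object $E$ of $Mod/T_{\mathcal{O}_M}$ together with a weak equivalence $q\colon E\xrightarrow{\sim} F(\mathcal{F})$. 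Upgrading $E$ then means exhibiting an $L_\infty$-algebroid $\tilde{E}$ with $F(\tilde{E})=E$, that is, lifting $E$ along $F$.

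Next I would apply Proposition~\ref{machinerysemimodel}.\ref{1} to $\mathcal{F}$ to obtain a cofibrant replacement $p\colon Q\mathcal{F}\to\mathcal{F}$ in $L_\infty/\mathcal{O}_M$, with $Q\mathcal{F}$ cofibrant and $p$ a trivial fibration. Because the transferred structure of Theorem~\ref{transfer} defines weak equivalences and fibrations through the underlying module, the map $F(p)\colon F(Q\mathcal{F})\to F(\mathcal{F})$ is a trivial fibration in $Mod/T_{\mathcal{O}_M}$. Since $E$ is cofibrant, the lifting axiom SM2 applied to the square with left edge $0\to E$ and right edge $F(p)$ produces a map $s\colon E\to F(Q\mathcal{F})$ with $F(p)\,s=q$; by the 2-out-of-3 property $s$ is then a weak equivalence. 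At this point I have a quasi-isomorphism $s$ from the resolution $E$ into the underlying complex of a genuine $L_\infty$-algebroid.

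The final step is to transfer the multibrackets of $Q\mathcal{F}$ across $s$ onto $E$ itself. Given a homotopy retraction of $F(Q\mathcal{F})$ onto $E$ compatible with $s$, the homotopy transfer theorem for $L_\infty$-algebras produces brackets $\{\ell_k\}_{k\ge 1}$ on $E$ with $\ell_1$ equal to the differential of the resolution, and extends $s$ to an $L_\infty$-quasi-isomorphism; composing with the anchor of $F(Q\mathcal{F})$ endows $E$ with the homotopy Lie-Rinehart action on $T_{\mathcal{O}_M}$ of Remark~\ref{remarkLR}. The resulting $\tilde{E}$ has underlying complex exactly $E$, hence is a universal Lie-$\infty$ algebroid in the sense of Definition~\ref{universal}.

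The main obstacle is the existence of that homotopy retraction, equivalently the promotion of the quasi-isomorphism $s$ to an honest homotopy equivalence. As $T_{\mathcal{O}_M}$ is concentrated in degree $0$ and fibrations are the surjections in positive degrees, every object of $Mod/T_{\mathcal{O}_M}$ is fibrant, and $E$ is cofibrant; the one missing ingredient is the cofibrancy of $F(Q\mathcal{F})$, i.e.\ that the forgetful functor carries cofibrant $L_\infty$-algebroids to cofibrant modules. I expect this to be the crux: cofibrancy is detected through cellular (colimit) presentations, which the forgetful functor does not manifestly respect, so the statement must be checked on each cell attachment. A pushout in $L_\infty/\mathcal{O}_M$ of a generating cofibration $LR(S_{\mathcal{O}_M}(n-1))\to LR(D_{\mathcal{O}_M}(n))$ admits, on underlying modules, the standard filtration of a free extension whose layers are built from wedge powers of free $\mathcal{O}_M$-modules; in characteristic $0$ each layer splits off, so the underlying map is a cofibration of $Mod/T_{\mathcal{O}_M}$, and stability under transfinite composition and retract then yields cofibrancy of $F(Q\mathcal{F})$. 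Once $s$ is a weak equivalence between cofibrant-fibrant objects it is a homotopy equivalence, and a Whitehead-type argument supplies the homotopy inverse $r$ and the contracting homotopy required to run the transfer. Should this route prove delicate, the alternative is to build the $\ell_k$ on $E$ directly by induction on arity, the obstruction to each extension living in a cohomology group that vanishes because $E$ is acyclic in positive degrees, which is the concrete shadow of the lifting property invoked above.
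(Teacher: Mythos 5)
First, a point of comparison you could not have known but which frames everything: the paper does not prove this statement at all. It is quoted verbatim from \cite[Theorem 1.6]{Lavau}, whose proof is an inductive, obstruction-theoretic construction carried out in that reference. What the paper proves instead is the \emph{analogous} Theorem~\ref{replacement} (existence of \emph{some} cofibrant replacement of $\mathcal{F}$ in $L_\infty/\mathcal{O}_M$, by Proposition~\ref{machinerysemimodel}.\ref{1}), and it then explicitly warns that the two statements are not interchangeable: a resolution, or a universal Lie $\infty$-algebroid, is a priori \emph{not} cofibrant in $L_\infty/\mathcal{O}_M$, the morphisms in $\holie$ are strict while those of \cite{Lavau} are general $L_\infty$-morphisms, and the homotopy notions differ (see Remark~\ref{homodif} and Questions~\ref{q1}, \ref{q2}, \ref{q3}). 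Your proposal is therefore an attempt to settle, in passing, precisely what the paper isolates as its open questions; it cannot be judged as a reconstruction of the paper's argument, and it should be judged on whether it closes those gaps. It does not.

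Two steps carry genuine gaps. First, you need $F(Q\mathcal{F})$ to be cofibrant in $Mod/T_{\mathcal{O}_M}$, i.e.\ that the forgetful functor sends cofibrant $L_\infty$-algebroids to cofibrant anchored modules. Transferred (semi-)model structures give no control of this kind: cofibrations in $\holie$ are generated by $LR$ of the generating cofibrations, and the underlying module of a pushout along such a map involves quotients by the $A\text{-}L_\infty$-ideal of Leibniz relations (Proposition~\ref{freelr}), so projectivity of the layers over the ring $\mathcal{O}_M$ (not a field!) is exactly what must be proved, not what can be waved through by ``each layer splits off in characteristic $0$''. Without this, your lift $s$ is merely a quasi-isomorphism, not a homotopy equivalence, and no contraction exists to feed into transfer. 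Second, and more fatally, the invocation of ``the homotopy transfer theorem for $L_\infty$-algebras'' is not a citation to an applicable theorem: an $L_\infty$-algebroid is not an algebra over an operad in $\mathcal{O}_M$-modules, because its binary bracket is a first-order differential operator in each slot (the Leibniz rule of Definition~\ref{holiea}), so the standard HTT does not apply verbatim. One must check by hand that the transferred $\ell_2$ satisfies the Leibniz rule, that the transferred $\ell_k$ for $k\geq 3$ are genuinely $\mathcal{O}_M$-multilinear (the Leibniz anomalies only cancel if one imposes side conditions such as $h\circ i=0$ on the contraction and uses that $T_{\mathcal{O}_M}$ is concentrated in degree $0$), and that the anchor remains a \emph{strict} morphism as Definition~\ref{holiea} and Remark~\ref{remarkLR} require --- composing an $L_\infty$-quasi-isomorphism with the anchor of $Q\mathcal{F}$ produces a priori only a homotopy anchor, the more general notion of Question~\ref{q5}. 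Moreover, the comparison map produced by transfer is a non-strict $L_\infty$-morphism, which does not even live in the category $\holie$ where the semi-model structure of Theorem~\ref{transfer} is defined; this is exactly the discrepancy of Remark~\ref{homodif}. Your closing fallback --- building the $\ell_k$ by induction on arity with obstructions killed by acyclicity --- is in fact the strategy of the actual proof in \cite{Lavau}, but there it requires the geometric input (the resolution consists of vector bundles, hence projective modules, and connections/partitions of unity are used), not merely the lifting axioms of the semi-model structure.
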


 The analogous notion in terms of the semi-model category on $L_\infty/\mathcal{O}_M$ is given by a cofibrant replacement. The existence of such cofibrant replacement is then a direct consequence of the axioms:

\begin{thm}\label{replacement}
For any singular foliation $\mathcal{F}$ there exists a replacement by a dg-$\mathcal{O}_M$-module endowed with a structure of $L_\infty$-algebroid over $\mathcal{F}$.
\end{thm}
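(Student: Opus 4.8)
The plan is to recognize the singular foliation $\mathcal{F}$ as an object of the semi-model category $L_\infty/\mathcal{O}_M$ and then invoke the general existence of cofibrant replacements. First I would observe, using the preceding lemma, that $\mathcal{F}\subset\Tsmooth$ is an $\mathcal{O}_M$-submodule closed under the $L_\infty/\mathcal{O}_M$-structure of $\Tsmooth$. Since all higher brackets of $\Tsmooth$ vanish (Remark~\ref{remarkLR}), the foliation $\mathcal{F}$ inherits only a binary bracket, namely the restriction of the commutator of derivations, together with the inclusion $\mathcal{F}\hookrightarrow\Tsmooth$ as anchor; placed in degree $0$, this is exactly the data of an object of $L_\infty/\mathcal{O}_M$ in the sense of Definition~\ref{holiea}.

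Next I would apply the transferred semi-model structure. By Theorem~\ref{transfer} the category $L_\infty/\mathcal{O}_M$ carries a semi-model structure in which the weak equivalences are the quasi-isomorphisms and the fibrations are the surjections in positive degrees. Applying Proposition~\ref{machinerysemimodel}.\ref{1}---equivalently, factoring the initial arrow $0\to\mathcal{F}$ by means of axiom SM3---yields a cofibrant replacement, that is, a factorization
$$
0\xrightarrow{\,i\,}Q\mathcal{F}\xrightarrow{\,p\,}\mathcal{F}
$$
with $i$ a cofibration and $p$ a trivial fibration, in the sense of Definition~\ref{cofibrantrep}.

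Finally I would unwind what this produces. By construction $Q\mathcal{F}$ is an $L_\infty$-algebroid over $\mathcal{O}_M$, so its underlying datum is a non-negatively graded dg-$\mathcal{O}_M$-module equipped with multibrackets and an anchor into $\Tsmooth$. Because $p$ is in particular a weak equivalence, it is a quasi-isomorphism; as $\mathcal{F}$ is concentrated in degree $0$, this forces $H^0(Q\mathcal{F})\cong\mathcal{F}$ and $H^i(Q\mathcal{F})=0$ for $i>0$, which is precisely the sense in which $Q\mathcal{F}$ is a replacement \emph{over} $\mathcal{F}$.

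As for difficulties, there is essentially no obstacle at the level of this statement: the substance has been front-loaded into the imported results, namely the existence of the semi-model structure (Theorem~\ref{transfer}, due to Nuiten) and the abstract cofibrant-replacement machinery (Proposition~\ref{machinerysemimodel}.\ref{1}). The only points requiring minor care are checking that $\mathcal{F}$ genuinely lands in the non-negatively graded subcategory where the semi-model structure lives, and matching the abstract notion of a trivial fibration onto $\mathcal{F}$ with the geometric picture of a resolution of $\mathcal{F}$; both are immediate once $\mathcal{F}$ is placed in degree $0$. It is worth emphasizing that, in contrast with the situation for Lie $\infty$-algebroids, this existence does not require any local finiteness of $\mathcal{F}$, since the factorization axiom SM3 applies unconditionally.
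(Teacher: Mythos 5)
Your proposal is correct and follows exactly the paper's argument: view $\mathcal{F}$ as an object of $L_\infty/\mathcal{O}_M$ and apply Proposition~\ref{machinerysemimodel}.\ref{1} (i.e.\ the factorization axiom \emph{SM3}) to the semi-model structure of Theorem~\ref{transfer}. The extra unwinding you give (identifying the trivial fibration as a quasi-isomorphism onto $\mathcal{F}$ concentrated in degree $0$) is a harmless elaboration of the same one-line proof the paper gives.
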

\begin{proof}
It suffices to apply Proposition~\ref{machinerysemimodel}.\ref{1} to the semi-model structure on $L_\infty/\mathcal{O}_M$ given by Theorem~\ref{transfer}.
\end{proof}

The terminology "universal" in definition \ref{universal} is due to the following result (\cite[Theorem 1.8]{Lavau}):

\begin{thm}\label{liftinglavau}
Two universal Lie $\infty$-algebroids over $\mathcal{F}$ are quasi-isomorphic in the category of Lie $\infty$-algebroids, and such a quasi-isomorphism is essentially unique (up to homotopy).
\end{thm}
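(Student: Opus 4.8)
The plan is to recognize that a universal Lie $\infty$-algebroid over $\mathcal{F}$ is nothing but a cofibrant replacement of $\mathcal{F}$ in the semi-model structure of Theorem~\ref{transfer}, and then to read off both clauses of the statement directly from Proposition~\ref{machinerysemimodel}.\ref{5}. So the theorem should reduce, after the right identifications, to a single invocation of the abstract machinery; the real content lies in justifying those identifications.

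First I would use Lemma~\ref{inclusion} to regard a universal Lie $\infty$-algebroid $Q\mathcal{F}$ as an object of $L_\infty/\mathcal{O}_M$, equipped with its canonical anchor-compatible morphism $p\colon Q\mathcal{F}\to\mathcal{F}$ obtained by projecting onto degree-$0$ cohomology, where $\mathcal{F}\subset T_{\mathcal{O}_M}$ is viewed in $L_\infty/\mathcal{O}_M$ with trivial higher structure. The next step is to show that $p$ exhibits $Q\mathcal{F}$ as a cofibrant replacement of $\mathcal{F}$ in the sense of Definition~\ref{cofibrantrep}. That $p$ is a weak equivalence is immediate from Definition~\ref{universal}: the underlying complex is a resolution of $\mathcal{F}$, so the cohomology of its sections recovers $\mathcal{F}$, whence $p$ is a quasi-isomorphism, which by Theorem~\ref{transfer} is precisely a weak equivalence; being concentrated so that it is surjective in positive degrees, it is in fact a trivial fibration.

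The remaining, and in my view hardest, point is to prove that $Q\mathcal{F}$ is cofibrant, i.e. that $0\to Q\mathcal{F}$ is a cofibration in the transferred structure. Here I would argue that the underlying object is a bounded resolution by sections of vector bundles, hence a cofibrant object of $Mod/T_{\mathcal{O}_M}$, and that the semi-free (triangular) nature of the universal $L_\infty$-structure — in which the internal differential and the higher brackets are adjoined degree by degree along the homological filtration of the resolution — realizes $Q\mathcal{F}$ as a retract of a cellular object built from the generating cofibrations $LR(I)$. I expect this to be the main obstacle: one must match Lavau's inductive construction of the universal structure (Definition~\ref{universal}) with the cell-attachment description of cofibrant objects in the transferred semi-model category, and take care in passing from free to projective (vector-bundle) modules via retracts. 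Note that $Q\mathcal{F}$ is \emph{not} the free algebroid $LR$ of the resolution, so one cannot simply invoke that the left adjoint preserves cofibrant objects; the filtration argument is genuinely needed.

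Granting cofibrancy, Proposition~\ref{machinerysemimodel}.\ref{5} applies to $\mathcal{F}$ and yields both conclusions at once: any two cofibrant replacements are weak equivalent — which by Theorem~\ref{transfer} means quasi-isomorphic in $L_\infty/\mathcal{O}_M$, and hence, via Lemma~\ref{inclusion}, quasi-isomorphic as Lie $\infty$-algebroids — and any two such weak equivalences between them are left homotopic. To finish I would translate the semi-model notion of left homotopy (Definition~\ref{lrhomotopy}, via cylinder objects) into the homotopy of morphisms used in \cite{Lavau}, which gives the ``essentially unique (up to homotopy)'' clause; this translation, together with the comparison of strict versus weak morphisms, is the secondary technical point to settle. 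Finally, since the fibrations are the surjections in positive degrees and $T_{\mathcal{O}_M}$ is concentrated in degree $0$, every object is fibrant, so the weak equivalence produced by Proposition~\ref{machinerysemimodel}.\ref{5} can be taken to be a genuine direct quasi-isomorphism rather than a zigzag, exactly matching the statement of the theorem.
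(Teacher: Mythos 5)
Your proposal attempts exactly the reduction that the paper explicitly warns against. In this paper Theorem~\ref{liftinglavau} is not proven at all: it is quoted as \cite[Theorem 1.8]{Lavau}, whose proof in that reference is by direct construction in the category of Lie $\infty$-algebroids and does not pass through the semi-model structure. What the paper proves via Proposition~\ref{machinerysemimodel}.\ref{5} is the \emph{analogous} statement, Theorem~\ref{lifting}, and immediately afterwards it stresses (see the discussion preceding Remark~\ref{homodif}) that a universal Lie $\infty$-algebroid is a priori a replacement of $\mathcal{F}$ in $L_\infty/\mathcal{O}_M$ but \emph{not} a cofibrant replacement, so that Theorem~\ref{liftinglavau} cannot a priori be deduced from Theorem~\ref{lifting}. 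The two steps on which your argument rests are precisely the paper's open problems: whether every universal Lie $\infty$-algebroid is cofibrant is Question~\ref{q2}, and whether left homotopy (via cylinder objects) agrees with the homotopy notion of \cite{Lavau} is Question~\ref{q3} (refined in Question~\ref{q4}). Your sketch of cofibrancy --- that the semi-free, triangular nature of the universal structure realizes $Q\mathcal{F}$ as a retract of a cellular object built from the generating cofibrations --- is a plausibility argument, not a proof: the cofibrations of the transferred structure of Theorem~\ref{transfer} are only characterized by a lifting property, and nothing in the paper supplies the cell-attachment filtration you would need; you yourself flag this as the main obstacle and then grant it.

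There is a further gap independent of cofibrancy: a mismatch of morphism classes. The morphisms in the semi-model category $L_\infty/\mathcal{O}_M$ are strict, so Proposition~\ref{machinerysemimodel}.\ref{5} can only yield a strict quasi-isomorphism, unique up to left homotopy \emph{among strict morphisms}. Lavau's theorem concerns $L_\infty$-morphisms, i.e.\ possibly non-strict ones (first bullet of Remark~\ref{homodif}, and the motivation for Question~\ref{q1}). Existence of a quasi-isomorphism would transfer (a strict quasi-isomorphism is in particular an $L_\infty$ one), but essential uniqueness within the larger class of non-strict morphisms does not follow from uniqueness within the strict class. So even if you resolved cofibrancy and the homotopy comparison, your argument would establish a strictly weaker uniqueness statement than the theorem claims. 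In short, your plan is a reasonable research program --- it is essentially the program the authors lay out in Section~\ref{open} --- but it is not a proof, and it does not reflect how the statement is actually justified in the paper, namely by citation of \cite{Lavau}.
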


This result has a direct analog in terms of the semi-model structure on $L_\infty/\mathcal{O}_M$.

\begin{thm}\label{lifting}
Let $\mathcal{F}$ be a singular foliation. Then, any two cofibrant replacements of $\mathcal{F}$  in $L_\infty/\Tsmooth$ are quasi-isomorphic (with strict morphisms of $L_\infty$-algebroids) and any two such isomorphisms are (left)-homotopic.
\end{thm}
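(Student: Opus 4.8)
The plan is to deduce the statement directly from Proposition~\ref{machinerysemimodel}.\ref{5}, applied to the semi-model category $\holie$ with $A$ the ring $\mathcal{O}_M$, as furnished by Theorem~\ref{transfer}. First I would check that $\mathcal{F}$ is a legitimate input for the proposition. By the preceding lemma a singular foliation $\mathcal{F}$ is an $\mathcal{O}_M$-submodule of $\Tsmooth$ closed under the $L_\infty/\mathcal{O}_M$-structure, so $\mathcal{F}$ is itself an object of $L_\infty/\mathcal{O}_M$, concentrated in degree $0$, with anchor the inclusion $\mathcal{F}\hookrightarrow\Tsmooth$. Hence a cofibrant replacement of $\mathcal{F}$ in the sense of Definition~\ref{cofibrantrep} is exactly what Proposition~\ref{machinerysemimodel} talks about.

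Next I would invoke Proposition~\ref{machinerysemimodel}.\ref{5}: any two cofibrant replacements of $\mathcal{F}$ in $L_\infty/\mathcal{O}_M$ are weak equivalent, and any two weak equivalences between them are left homotopic. Concretely, given cofibrant replacements $p\colon Q\mathcal{F}\to\mathcal{F}$ and $p'\colon Q'\mathcal{F}\to\mathcal{F}$, the comparison map arises by solving the lifting problem of $p$ against the trivial fibration $p'$: since $0\to Q\mathcal{F}$ is a cofibration and $p'$ is a trivial fibration, axiom SM2 supplies a lift $h\colon Q\mathcal{F}\to Q'\mathcal{F}$ with $p'h=p$, and the two-out-of-three property (SM1) forces $h$ to be a weak equivalence. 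The assertion that any two such comparison weak equivalences are left homotopic is then precisely the second half of Proposition~\ref{machinerysemimodel}.\ref{5}, which itself rests on parts \ref{2} and \ref{3}.

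It then remains to translate the abstract conclusion into the stated one. By Theorem~\ref{transfer} the weak equivalences in $L_\infty/\mathcal{O}_M$ are exactly the quasi-isomorphisms, so $h$ is a quasi-isomorphism; and since every morphism in $\holie$ is by definition (Definition~\ref{holiea} and Remark~\ref{remarkLR}) a strict $\mathcal{O}_M$-module morphism preserving the multibrackets, $h$ is automatically a \emph{strict} morphism of $L_\infty$-algebroids. This is exactly the point at which the statement sharpens \cite[Theorem 1.8]{Lavau}: the comparison is strict rather than merely an $L_\infty$-morphism, simply because all arrows of $\holie$ are strict.

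Because the argument is essentially a substitution into the machinery, there is no single hard computational step; the genuine difficulty has already been absorbed into Proposition~\ref{machinerysemimodel} and into the verification of Theorem~\ref{transfer}. The only point I would take care to confirm — and the likeliest source of a gap — is that the hypotheses of Proposition~\ref{machinerysemimodel} hold verbatim here: that $L_\infty/\mathcal{O}_M$ is genuinely a left semi-model category with the claimed weak equivalences and fibrations, and that $\mathcal{F}$, as identified above, is an honest object of it, so that ``cofibrant replacement'' means the same thing in this theorem as in Definition~\ref{cofibrantrep}.
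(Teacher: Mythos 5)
Your proposal is correct and follows essentially the same route as the paper, which proves this theorem simply by invoking Proposition~\ref{machinerysemimodel}.\ref{5} for the semi-model structure on $L_\infty/\mathcal{O}_M$ given by Theorem~\ref{transfer}. Your additional unpacking of the comparison lift via Axiom \emph{SM2} and the identification of weak equivalences with quasi-isomorphisms merely makes explicit what the paper delegates to Lemma~\ref{liftingreplacement} and Theorem~\ref{transfer}.
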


\begin{proof}
This is proposition \ref{machinerysemimodel}.\ref{5}.
\end{proof}

Even though Lemma~\ref{inclusion} states that any universal Lie $\infty$-algebroid over $\mathcal{F}$ is a replacement of $\mathcal{F}$ in $L_\infty/\mathcal{O}_M$ (i.e. an element of $L_\infty/\mathcal{O}_M$ weakly equivalent to $\mathcal{F}$), it is a priori \underline{not} a cofibrant replacement. In particular, we can not à priori deduce Theorem~\ref{liftinglavau} from Theorem~\ref{lifting}. Moreover, Theorem \ref{liftinglavau} and \ref{lifting} differ in two other points: 

\begin{rem}\label{homodif}
\begin{itemize}
    \item The morphisms in proposition \ref{liftinglavau} are $L_\infty$-morphisms, while the morphisms in \ref{lifting} are strict morphisms of $L_\infty$-algebras.
\item The notion of homotopy equivalence used in \ref{liftinglavau} is very different from the one we are using which is based on Cylinder objects (see definition \ref{lrhomotopy}.1).
\end{itemize}
\end{rem}

The semi-model category statement analogous to \cite[Proposition 1.22]{Lavau} is the following theorem, which is a corollary of Proposition~\ref{machinerysemimodel}.\ref{5}.

\begin{thm}
Left homotopy of Lie $\infty$-algebroid morphisms is an equivalence relation denoted by $\overset{l}{\sim}$, which is compatible with composition.
\end{thm}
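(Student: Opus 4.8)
The plan is to work in the semi-model category $\mathcal{C}=\holie=L_\infty/\Tsmooth$ of Theorem~\ref{transfer} and to exploit a feature special to it: every object is fibrant. The terminal object is $\Tsmooth$ equipped with its identity anchor, the unique morphism from an $L_\infty$-algebroid $(L,\rho)$ to it is the anchor $\rho$, and since $\Tsmooth$ is concentrated in degree $0$ the fibration condition of Theorem~\ref{transfer} (surjectivity in positive degrees) is vacuous; hence $\rho$ is a fibration and $(L,\rho)$ is fibrant. As the morphisms under consideration run between cofibrant replacements, all objects involved are bifibrant. Granting this, the equivalence-relation assertion is immediate from Proposition~\ref{machinerysemimodel}.\ref{3}, whose cofibrancy hypothesis on the source is met, so that reflexivity, symmetry and transitivity are already available; it remains only to prove compatibility with composition.

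I would reduce that compatibility to its two one-sided forms and recombine them by transitivity: given $f\overset{l}{\sim}g\colon A\to X$ and $f'\overset{l}{\sim}g'\colon X\to Y$, post-composition yields $f'f\overset{l}{\sim}f'g$ and pre-composition yields $f'g\overset{l}{\sim}g'g$, whence $f'f\overset{l}{\sim}g'g$. Post-composition is elementary and needs no (co)fibrancy: if $H\colon\mathrm{Cyl}(A)\to X$ is a left homotopy from $f$ to $g$, with $A\sqcup A\xrightarrow{(i_0,i_1)}\mathrm{Cyl}(A)\xrightarrow{\sigma}A$ and $\sigma\in\mathcal{W}$, then for any $h\colon X\to Y$ the composite $hH$ is a left homotopy from $hf$ to $hg$.

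Pre-composition is the crux and the step I expect to be hardest, because the semi-model axioms SM2 and SM3 carry cofibrant-domain hypotheses that must be tracked throughout. Given $k\colon B\to A$ with $B$ cofibrant and $H$ as above, I would first upgrade $H$ to a homotopy on a \emph{very good} cylinder, one whose projection $\sigma$ is a trivial fibration: factor $\sigma$ (by SM3, legitimately, since $\mathrm{Cyl}(A)$ is cofibrant) as a trivial cofibration followed by a fibration, observe by the 2-out-of-3 property that this fibration is trivial, and extend $H$ across the trivial cofibration by lifting against the fibration from $X$ to the terminal object $\ast$ — this is precisely where fibrancy of $X$ is used. Next, choosing a good cylinder $B\sqcup B\xrightarrow{(j_0,j_1)}\mathrm{Cyl}(B)\xrightarrow{\tau}B$ on the cofibrant object $B$, I would produce $\Phi\colon\mathrm{Cyl}(B)\to\mathrm{Cyl}(A)$ as a lift in the square with top arrow $(i_0k,i_1k)$, left arrow the cofibration $(j_0,j_1)$ with cofibrant domain $B\sqcup B$, bottom arrow $k\tau$, and right arrow the trivial fibration $\sigma$; the lift exists by SM2, and the square commutes because $\sigma i_0=\sigma i_1=\mathrm{id}_A$ and $\tau j_0=\tau j_1=\mathrm{id}_B$. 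Then $H\Phi\colon\mathrm{Cyl}(B)\to X$ restricts to $fk$ and $gk$ on the two ends, establishing $fk\overset{l}{\sim}gk$ and, with the reduction above, completing the proof.
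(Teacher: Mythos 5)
Your proposal is correct, and it actually does more than the paper does for this statement. The paper's entire proof is a one-line citation of Proposition~\ref{machinerysemimodel} (it points at item~.\ref{5}, though the relevant item is the equivalence-relation statement~.\ref{3}, proved in the appendix by gluing two good cylinders along a pushout); the clause ``compatible with composition'' is never argued anywhere in the paper. You treat the equivalence-relation part exactly as the paper does, by invoking Proposition~\ref{machinerysemimodel}.\ref{3} for cofibrant sources, but you then supply a genuine proof of the composition clause: the reduction to post-composition (trivial) and pre-composition, and the pre-composition argument via a very good cylinder and a lift, which is the semi-model adaptation of the classical argument in \cite[Section 4]{DS}. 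The enabling observation — which the paper never records, and which is what lets the cofibrant-domain hypotheses of SM2 and SM3 be satisfied throughout — is that every object of $\holie$ is fibrant, since the unique map to the terminal object $(\Tsmooth,\mathrm{id})$ is the anchor and surjectivity in positive degrees is vacuous when the target is concentrated in degree $0$. That is a correct and worthwhile addition.

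One step needs repair. You justify applying the second factorization of SM3 to $\sigma\colon Cyl(A)\to A$ by asserting that $Cyl(A)$ is cofibrant, but Definition~\ref{lrhomotopy} only requires of a cylinder object that $Cyl(A)\to A$ be a weak equivalence; the structure map $A\amalg A\to Cyl(A)$ need not be a cofibration, so the cylinder carrying the given homotopy need not be cofibrant even when $A$ is — and in a semi-model category, unlike a model category, the cofibrancy hypothesis in SM3 cannot be dropped. The fix is standard and stays inside your framework: first factor $(i_0,i_1)\colon A\amalg A\to Cyl(A)$, by the unconditional half of SM3, as a cofibration $A\amalg A\to C$ followed by a trivial fibration $p\colon C\to Cyl(A)$. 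Then $C\to A$ is a weak equivalence by 2-out-of-3, the composite $Hp$ is a left homotopy from $f$ to $g$ carried by the good cylinder $C$, and $C$ is cofibrant because $A$ is (the map $0\to A\amalg A$ is a cofibration, e.g.\ by Lemma~\ref{pushoutclosed}, and $A\amalg A\to C$ is one by construction). With this insertion, the rest of your argument goes through verbatim.
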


Once again, the notion of homotopy equivalence used in the two statements do not coincide. The reason is that when one works in the realm of model categories, there are standard definitions of left and right homotopies between morphisms, namely Definition~\ref{lrhomotopy}.\ref{leftHomotopy} and \ref{lrhomotopy}.\ref{rightHomotopy} below.

\begin{df}\label{lrhomotopy}
Let $A$ and $X$ be objects in a semi-model category $\mathcal{C}$.
\begin{enumerate}
    \item A {\bf cylinder object} for $A$ is an object $Cyl(A)$ together with a factorization
    \begin{center}
    \begin{tikzcd}
    A\amalg A
    \arrow[rr, bend right,  "id_{A}+ id_{A}"']
    \arrow[r, "i_0+i_1"]
    &Cyl(A)
    \arrow[r, "\sim"]
    &A
    \end{tikzcd}
    \end{center}
    where the map $Cyl(A)\to A$ is a weak equivalence.
    \item Similarly, a {\bf path object} for $X$ is an object $Path(X)$ together with a factorization
    \begin{center}
    \begin{tikzcd}
    X
    \arrow[rr, bend right,  "id_{A}\times id_{A}"']
    \arrow[r, "\sim"]
    &Path(X)
    \arrow[r, "p_0\times p_1"]
    &X\times X
    \end{tikzcd}
    \end{center}
    where the morphism $X\to Path(X)$ is a weak equivalence.
    \item\label{leftHomotopy}Two morphisms $f,g\colon A\to X$ are {\bf left homotopic}, and we write {\bf $f\overset{l}{\sim}g$}, if there is a cylinder object of $A$ and a map $H\colon Cyl(A)\to X$ such that $f+ g\colon A\amalg A\to X$ factorizes as
    $$
    A\amalg A\to Cyl(A)\xrightarrow{H}X
    $$
    \item\label{rightHomotopy} Analogously, $f,g\colon A\to X$ are {\bf right homotopic}, and we write {\bf $f\overset{r}{\sim}g$}, if there is a path object of $X$ and a map $H\colon A\to Path(X)$ such that $f\times g\colon A\to X\times X$ factorizes as
    $$
    A\xrightarrow{H} Path(X)\to X\times X
    $$
\end{enumerate}
\end{df}

\begin{rem}\label{remho}
 Because of the restrictions imposed by Axiom~\emph{SM3}, it is natural to use cylinder objects for the notion of homotopy in $L_\infty/\mathcal{O}_M$, i.e. to use left-homotopies. However, it is  still possible to define right homotopies by a construction of path objects akin to the one found in \cite[Section 2]{Vezzosi} which turns out to be a notion similar to the one used by \cite{Lavau} (see \cite[Remark 6]{Lavau}). But then, it is not clear to us how to prove a statement similar to proposition \ref{machinerysemimodel}.4 but for right homotopies. 
\end{rem}

\subsection{Open questions}\label{open}

The ineffable original goal of this paper was to obtain the results of \cite{Lavau} as genuine properties of a (semi)-model category on $L_\infty/\Tsmooth$. However, Remark~\ref{homodif} implies that one needs to change the (semi)-model structure we consider if one wants to reach this goal. More precisely, this work triggers the following questions.

\begin{que}\label{q1}
Does there exist a variant of a notion of model-category on $L_\infty/\Tsmooth$ for which the morphisms are not anymore necessarily strict?
\end{que}

If such a structure exists we can consider the following questions:

\begin{que}\label{q2}
Is any universal Lie $\infty$-algebroid cofibrant? In other words, is the first part of Theorem~\ref{liftinglavau} a corollary of Theorem~\ref{lifting} (with non necessarily strict morphisms of $L_\infty$-algebroids)?
\end{que}

\begin{que}\label{q3}(Inspired by theorem \ref{remho})
Does the notion of left homotopy coincide with the notion of homotopy considered in \cite{Lavau}? In other words, is the second part of Theorem~\ref{liftinglavau} a corollary of Theorem~\ref{lifting}?
\end{que}

Question \ref{q3} can be divided into two.

\begin{que}
Does the equivalence relation given by left homotopy coincide with the one given by right homotopy? 
\end{que}

\begin{que}\label{q4}
Does the notion of right homotopy coincide with the definition of homotopy used in \cite{Lavau}? 
\end{que}

Another possible generalisation of a $L_\infty$-algebroid is given by allowing the existence of higher anchors which form a homotopy morphism to $T_A$.

\begin{que}\label{q5}
Is there a (semi-)model structure in the category of $L_\infty$-algebroids with homotopy anchors? How does it relate to the other model structures above?
\end{que}

We have already noted (Remark~\ref{remarkLR}) that $L_\infty$-algebroids are particular examples of Lie-Rinehart algebras up to homotopy. On the other hand, in \cite[Section 5]{Kji}, Kjeseth studies the notion of a resolution of a Lie-Rinehart Pair.

 \begin{que}
 Does there exist a variant of a notion of model-category on the category of homotopy Lie-Rinehart pairs for which the resolutions considered in \cite[Definition 5.1]{Kji} are cofibrant replacements?
  \end{que}

\begin{que}
Does it induce, when restricted to a fixed associative algebra A, the structure considered aimed at in question  \ref{q1}?
\end{que}

\appendix
\section{Proof of the main technical tool}
We give a proof of the properties of semi-model categories mentioned in Section~\ref{tools}. The proof follows the lines of the same properties for model categories in \cite[Sections 4 and 5]{DS}.

\begin{proof}[Proof of Proposition~\ref{machinerysemimodel}]
For this proof we follow the proof of \cite[Propositon 1.2.5]{Hovey}.
\begin{enumerate}
    \item From Axiom~\emph{SM3} of Definition~\ref{semi-model}, the maps from the initial object $0\to A$ can be factorized as a cofibration followed by a trivial fibration
    $$
    0\to QA\to A.
    $$
    
    Then, $QA$ is the cofibrant replacement.
    \item Reflexivity for a map $f$ is given by the homotopy
    $$
    A\amalg A\to Cyl(A)\xrightarrow{\sim} A\xrightarrow{f} X,
    $$
    for any cylinder object.
    
    Symmetry is given by reversing the order of homotopies. That is, given a homotopy $Cyl(A)\xrightarrow{H} X$, we give a new homotopy as
    $$
    A\amalg A\xrightarrow{s}A\amalg A\xrightarrow{i_0+i_1}Cyl(A)\xrightarrow{H} X,
    $$
    where $s\colon A\amalg A\to A\amalg A$ is the morphism that switches components.
    
    For transitivity consider maps $f$, $g$ and $h$ such that $f\overset{l}{\sim} g$ and $g\overset{l}{\sim} h$ through the homotopies $H'\colon Cyl'(A)\to X$ and $H''\colon Cyl''(A)\to X$ respectively. We can take the maps $A\amalg A\to Cyl'(A)$ and $A\amalg A\to Cyl''(A)$ to be cofibrations from  Axiom~\emph{SM3}. Since cofibrations are stable under pushout from Lemma~\ref{pushoutclosed}, and $A\amalg A$ is obtained through the pushout
    \begin{center}
        \begin{tikzcd}
        0
        \arrow[r, "!"]
        \arrow[d, "!"]
        &A
        \arrow[d]
        \\
        A
        \arrow[r]
        & A\amalg A
        \end{tikzcd}
    \end{center}
    we have that the arrows $A\xrightarrow{i_1}Cyl'(A)$ and $A\xrightarrow{i_0}Cyl''(A)$ are cofibrations. Even further, from Axiom~\emph{SM1} these two maps are trivial cofibrations and $Cyl'(A)$ is cofibrant.

    We proceed to define $Cyl(A)$ as the pushout
    \begin{center}
        \begin{tikzcd}
        A
        \arrow[r, "i_1"]
        \arrow[d, "i_0"]
        & Cyl'(A)
        \arrow[d]
        \arrow[ddr, bend left, "\sim"]
        \\
        Cyl''(A)
        \arrow[r]
        \arrow[rrd, bend right, "\sim"]
        &Cyl(A)
        \arrow[rd, dotted]
        \\
        &&A
        \end{tikzcd}
    \end{center}
    From Lemma~\ref{pushoutclosed} the right vertical arrow is a trivial cofibration and therefore, $A\amalg A\to Cyl(A)\xrightarrow{\sim}A$ is a cylinder object. A further application of the universal property of $Cyl(A)$ for the homotopies $H'$ and $H''$ gives a homotopy $H\colon Cyl(A)\to X$ between $f$ and $h$.
    \item This can be proven analogously to the equivalent result for model categories \cite[Lemma 4.9]{DS}. This proof depends just on Axioms MC4(i) and MC5(i) for model-structures. In our case, it suffices to use \emph{SM2} and \emph{SM3} in Definition\ref{semi-model} instead.
    \item This follows from Lemma~\ref{liftingreplacement} applied to any two cofibrant replacements of $A$.
\end{enumerate}
\end{proof}
\begin{lem}\label{liftingreplacement}
Let $X$ and $Y$ be objects in a semi-model category $\mathcal{C}$. Let as well $QX\to X$ and $QY\to Y$ be cofibrant replacements.

Given a map $f\colon X\to Y$ in $\mathcal{C}$  there is a map $\hat{f}\colon QX\to QY$ that lifts $f$. The map $\hat{f}$ depends up to left homotopy only on $f$, and is a weak equivalence if and only if so is $f$.
\end{lem}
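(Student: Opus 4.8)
The plan is to follow the proof of \cite[Lemma 5.1]{DS} for model categories, checking at each step that only the \emph{unrestricted} halves of Axioms \emph{SM2} and \emph{SM3} (the clauses phrased without any cofibrancy hypothesis on the domain) are invoked, so that nothing is lost in passing to a semi-model category. Write $p_X\colon QX\to X$ and $p_Y\colon QY\to Y$ for the two cofibrant replacements; by Definition~\ref{cofibrantrep} these are trivial fibrations and $QX$, $QY$ are cofibrant, so in particular $0\to QX$ and $0\to QY$ are cofibrations.

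For the existence of $\hat{f}$, I would consider the lifting square whose left edge is the cofibration $0\to QX$, whose right edge is the trivial fibration $p_Y$, whose bottom edge is $f\circ p_X\colon QX\to Y$, and whose top edge is the unique map $0\to QY$. By the first clause of Axiom~\emph{SM2}, cofibrations have the left lifting property against trivial fibrations with no cofibrancy restriction, so a lift $\hat{f}\colon QX\to QY$ exists and satisfies $p_Y\circ\hat{f}=f\circ p_X$; this is the asserted lift of $f$.

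For uniqueness up to left homotopy I would avoid reconstructing a cylinder argument by hand and instead invoke Proposition~\ref{machinerysemimodel}.\ref{2}: since $QX$ is cofibrant and $p_Y$ is a trivial fibration, the map $p_{Y\ast}\colon\pi^l(QX,QY)\to\pi^l(QX,Y)$ is a bijection. If $\hat{f}$ and $\hat{f}'$ are two lifts, then $p_Y\hat{f}=fp_X=p_Y\hat{f}'$, so $[\hat{f}]$ and $[\hat{f}']$ have the same image under $p_{Y\ast}$, and injectivity gives $\hat{f}\overset{l}{\sim}\hat{f}'$. (Alternatively one factors the fold map $QX\amalg QX\to QX$ through a cylinder object using the first clause of \emph{SM3}, checks that $\hat{f}+\hat{f}'$ and the composite $Cyl(QX)\xrightarrow{\sim}QX\xrightarrow{fp_X}Y$ fit into a lifting square against $p_Y$, and lifts again by \emph{SM2}.) This is the step where the cofibrancy of $QX$ is genuinely used, both for $\overset{l}{\sim}$ to be well behaved on $\pi^l(QX,-)$ (Proposition~\ref{machinerysemimodel}.\ref{3}) and for \ref{machinerysemimodel}.\ref{2} to apply at all.

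Finally, for the claim that $\hat{f}$ is a weak equivalence if and only if $f$ is, I would apply the 2-out-of-3 property of $\mathcal{W}$ (Axiom~\emph{SM1}) to the relation $p_Y\hat{f}=fp_X$, using that $p_X$ and $p_Y$ are weak equivalences: if $f\in\mathcal{W}$ then $fp_X\in\mathcal{W}$ and hence $\hat{f}\in\mathcal{W}$, while if $\hat{f}\in\mathcal{W}$ then $fp_X\in\mathcal{W}$ and hence $f\in\mathcal{W}$. I expect the main point to verify—rather than a genuine obstacle—to be confirming that none of these steps secretly needs the restricted second clauses of \emph{SM2}/\emph{SM3} (trivial cofibrations lifting against fibrations, which would require a cofibrant domain we may not control). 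Since every lift used here is of a cofibration against a trivial fibration and every factorization is the cofibration/trivial-fibration one, the model-category argument should transfer essentially verbatim.
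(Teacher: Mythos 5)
Your proposal is correct and takes essentially the same route as the paper: the identical lifting square (the cofibration $0\to QX$ against the trivial fibration $p_Y$, with bottom edge $f\circ p_X$) invoking the unrestricted clause of Axiom~\emph{SM2} for existence, and the identical appeal to Proposition~\ref{machinerysemimodel}.\ref{2} for uniqueness up to left homotopy. Your explicit 2-out-of-3 argument for the weak-equivalence claim is a welcome addition, since the paper's proof leaves that part of the statement unaddressed.
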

\begin{proof}
Existence of such $\hat{f}\colon QX\to QY$ is given by finding a lifting of $f\circ p_X$ in the diagram
\begin{center}
\begin{tikzcd}
\emptyset
\arrow[r]
\arrow[d]
& QY
\arrow[d, "\sim", swap, ""]
\\
QX
\arrow[r, "f\circ p_X"]
\arrow[ru, dotted]
& Y
\end{tikzcd}
\end{center}

This lifting exists in a semi-model category from Axiom \emph{SM2} of Definition~\ref{semi-model}.

The unicity of the homotopy class of $\hat{f}$ follows from Proposition~\ref{machinerysemimodel}.\ref{2}.
\end{proof}

\begin{lem}\label{Icellpushout}
In a semi-model category the maps in $Cof$ are exactly the maps with the left lifting property with respect to $\mathcal{W}\cap Fib$.

As well, the maps with cofibrant domain that have the left lifting property with respect to $Fib$ are exactly those that are already in $\mathcal{W}\cap Cof$.
\end{lem}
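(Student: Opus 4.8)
The plan is to prove both statements by the standard retract argument, reducing each to the factorization axiom \emph{SM3} together with the retract-stability in \emph{SM1}. In each case one inclusion is immediate from \emph{SM2}, while the reverse inclusion is obtained by factoring the given map and exhibiting it as a retract of one of the two factors.

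For the first statement, the inclusion that every $f\in Cof$ has the left lifting property against $\mathcal{W}\cap Fib$ is precisely the first clause of \emph{SM2}. Conversely, suppose $f\colon A\to B$ has the left lifting property against $\mathcal{W}\cap Fib$. Using the first factorization in \emph{SM3}, I would write $f = p\circ i$ with $i\colon A\to C$ a cofibration and $p\colon C\to B$ a trivial fibration. Since $p\in\mathcal{W}\cap Fib$, the assumed lifting property yields $g\colon B\to C$ with $g\circ f = i$ and $p\circ g = \mathrm{id}_B$. Together with $f = p\circ i$, these identities say exactly that $f$ is a retract of $i$, the retract being realized by $\mathrm{id}_A$ on the source and by $g,p$ on the target. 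As $i\in Cof$ and $Cof$ is closed under retracts by \emph{SM1}, we conclude $f\in Cof$. This half uses only the unrestricted clauses of \emph{SM3} and \emph{SM2}, so no cofibrancy hypothesis on $A$ is needed.

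For the second statement, the inclusion $\mathcal{W}\cap Cof \subseteq \{\text{maps with LLP against } Fib\}$ for maps with cofibrant domain is the second clause of \emph{SM2}. Conversely, let $f\colon A\to B$ have cofibrant domain $A$ and the left lifting property against all fibrations. Because $A$ is cofibrant, the second factorization in \emph{SM3} applies and gives $f = p\circ i$ with $i\colon A\to C$ a trivial cofibration and $p\colon C\to B$ a fibration. Lifting $f$ against $p$ produces $g\colon B\to C$ with $g\circ f = i$ and $p\circ g = \mathrm{id}_B$, exhibiting $f$ as a retract of $i$ exactly as before. Since $i\in\mathcal{W}\cap Cof$ and both $\mathcal{W}$ and $Cof$ are closed under retracts by \emph{SM1}, we obtain $f\in\mathcal{W}\cap Cof$.

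The only point requiring care, and the main obstacle, is the bookkeeping of the cofibrancy hypotheses: the second factorization of \emph{SM3} and the second lifting clause of \emph{SM2} are available only for maps with cofibrant domain, which is exactly why the second statement is restricted to such maps and why the retract argument there remains inside the axioms as stated. The first statement carries no such restriction, since both the cofibration/trivial-fibration factorization and the corresponding lifting clause hold for arbitrary maps.
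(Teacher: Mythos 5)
Your proof is correct and is essentially the paper's own argument: the paper simply cites the retract argument of Dwyer--Spali\'nski (Proposition 3.13 (i) and (ii) of \cite{DS}), which is exactly the factorization-plus-retract reasoning you carry out, including the correct bookkeeping of where the cofibrant-domain hypothesis is needed for the second statement.
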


The proof of this lemma is the same as for \cite[Proposition 3.13 (i) and (ii)]{DS}.

Since trivial fibrations determine the cofibrations and respectively, fibrations determine trivial cofibrations with cofibrant domain, we can use the argument of the proof of \cite[Proposition 3.14 (i) and (ii)]{DS}. Therefore, we have the following lemma:

\begin{lem}\label{pushoutclosed}
In a semi-model category, $Cof$ is closed under pushouts. Similarly, $\mathcal{W}\cap Cof$ if closed under pushouts along morphisms between cofibrant objects. 
\end{lem}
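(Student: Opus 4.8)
The plan is to exploit the lifting-property characterisations of $Cof$ and of $\mathcal{W}\cap Cof$ supplied by Lemma~\ref{Icellpushout}, together with the purely categorical fact that any class of maps defined as the left-lifting-property class against a fixed collection of maps is automatically stable under pushouts. Concretely, for the first assertion I would record that, by Lemma~\ref{Icellpushout}, $Cof$ is precisely the class of maps having the left lifting property against $\mathcal{W}\cap Fib$, so it suffices to prove the general statement: if $i\colon A\to B$ has the left lifting property against a map $p\colon X\to Y$, then so does any pushout $j\colon C\to D$ of $i$.

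For this general step I would set up the pushout square with structure maps $g\colon B\to D$ and $j\colon C\to D$ along $f\colon A\to C$ (so that $j\circ f = g\circ i$), take an arbitrary lifting problem given by $u\colon C\to X$ and $v\colon D\to Y$ with $p\circ u = v\circ j$, and reduce it to a lifting problem for $i$ against $p$ by precomposition. The pair $(u\circ f,\, v\circ g)$ makes the outer square commute, since $p\circ u\circ f = v\circ j\circ f = v\circ g\circ i$. A lift $h\colon B\to X$ then exists by hypothesis, satisfying $h\circ i = u\circ f$ and $p\circ h = v\circ g$; the universal property of the pushout applied to $u$ and $h$ (which agree after pulling back to $A$) produces the required map $w\colon D\to X$ with $w\circ j = u$ and $w\circ g = h$. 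A second appeal to the uniqueness clause of the pushout, comparing $p\circ w$ and $v$ after precomposition with $j$ and with $g$, shows $p\circ w = v$. Letting $p$ range over $\mathcal{W}\cap Fib$ yields the first claim.

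For the second assertion I would again invoke Lemma~\ref{Icellpushout}, this time in the form that a map with cofibrant domain lies in $\mathcal{W}\cap Cof$ exactly when it has the left lifting property against all fibrations. Starting from a trivial cofibration $i\colon A\to B$ and a pushout along a map $f\colon A\to C$ between cofibrant objects, the very same diagram chase as above---now with $p$ an arbitrary fibration---shows that the pushout $j\colon C\to D$ has the left lifting property against all fibrations; since $C$ is cofibrant by hypothesis, the characterisation applies in the reverse direction and places $j$ in $\mathcal{W}\cap Cof$.

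The one point demanding care, and the main obstacle, is the cofibrancy bookkeeping in the second assertion: the reverse direction of the lifting characterisation is only available for maps with cofibrant domain, so one must verify that both the domain $A$ of the original trivial cofibration and the domain $C$ of the pushout map are cofibrant. This is exactly what the hypothesis of a pushout \emph{along a morphism between cofibrant objects} secures---$A$ cofibrant is needed so that $i$ itself has the left lifting property against fibrations (again via Lemma~\ref{Icellpushout}), while $C$ cofibrant is needed in order to conclude that the resulting $j$ is a trivial cofibration. Without the cofibrancy assumption the forward lifting argument still goes through, but the conclusion $j\in\mathcal{W}\cap Cof$ cannot be drawn, which is precisely why the semi-model axioms restrict this closure property to the cofibrant setting.
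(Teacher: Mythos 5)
Your proof is correct and takes essentially the same route as the paper: the paper's own argument likewise reduces both claims to the lifting-property characterisations of Lemma~\ref{Icellpushout} and then invokes the standard pushout diagram chase of \cite[Proposition 3.14 (i) and (ii)]{DS}, which you have simply written out in full. Your cofibrancy bookkeeping in the second assertion (cofibrancy of $A$ for the forward lifting direction, cofibrancy of $C$ to apply the characterisation in reverse) is exactly the point at which the semi-model setting forces the restriction to pushouts along morphisms between cofibrant objects.
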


\bibliographystyle{plain}
\bibliography{semimodelstrucholierinehart}
\end{document}